\numberwithin{equation}{section}
\numberwithin{figure}{section}
\theoremstyle{plain}
\newtheorem{theorem}{Theorem}[section]
\newtheorem{corollary}[theorem]{Corollary}
\newtheorem{lemma}[theorem]{Lemma}
\newtheorem{proposition}[theorem]{Proposition}
\theoremstyle{definition}
\newtheorem{definition}[theorem]{Definition}
\theoremstyle{remark}
\newtheorem{remark}[theorem]{Remark}
\begin{document}

\title[Finding a system of essential 2-orbifolds]{Finding a system of essential 2-suborbifolds}

\author{Yoshihiro Takeuchi}
\address{Department of Mathematics,
         Aichi University of Education,
         Igaya, Kariya 448-0001, Japan}
\email{yotake@auecc.aichi-edu.ac.jp}

\author{Misako Yokoyama}
\address{Department of Mathematics,
         Faculty of Science, Shizuoka University,
         Ohya, Suruga-ku, Shizuoka 422-8529, Japan}
\email{smmyoko@ipc.shizuoka.ac.jp}

\keywords{orbifold, orbifold fundamental group, Culler-Morgan-Shalen theory}

\begin{abstract}
We make an analogy of Culler-Morgan-Shalen theory. Our main goal is to show that there exists a non-empty system of essential 2-suborbifolds respecting a given splitting of the orbifold fundamental group.
\end{abstract}

\maketitle

MSC:primary 57M50; secondary 57M60

\section{Introduction}

For a 3-dimensional manifold $N$, the essential (i.e., incompressible and not boundary parallel) 2-suborbifolds are corresponding to the decompositions of the fundamental group of $N$. If $N$ has an essential and separating 2-subsphere, $\pi_1(N)$ has the free product decomposition which respects its geometric decomposition, and conversely, if $\pi_1(N)$ has a free product decomposition, $N$ has an essential and separating 2-subsphere which realizes its algebraic decomposition. If $N$ has an essential 2-submanifold $S$ which is not a 2-sphere but separating, the above decomposition of $\pi_1(N)$ turns to be an amalgamated free product decomposition, and if $S$ is non-separating, it does to be an HNN extension decomposition. Moreover, \cite{C-S} proved the theorem that if $N$ acts on a simplicial tree nontrivially, $N$ has a non-empty system of essential 2-submanifolds which respects that action.

A similar approach should be considered for 3-orbifolds. If a 3-obifold $M$ has an essential 2-suborbifold, it is clear that the orbifold fundamnetal group $\pi_1(M)$ has an amalgamated free product decomposition or an HNN extension decomposition. In \cite{fes} (respectively, \cite{kune}) we found an essential non-spherical (respectively, spherical) 2-suborbifold realizing a given algebraic decomposition of the orbifold fundamental group of $M$.

In the present paper we show the following:

\begin{theorem}\label{copy-main1}
Let $M$ be a good, compact, connected, orientable 3-orbifold without non-separating spherical 2-orbifolds. We assume that the fundamental group of each prime component of $M$ is infinite. Suppose that $\pi_1(M)$ has a nontrivial finite splitting. Then there exists a non-empty system of essential 2-suborbifolds $S_1,\dots,S_n\subset M$ such that for each component $Q$ of $\displaystyle M-\bigcup_{i=1}^nS_i$, $\pi_1(Q)$ is contained in a vertex group.
\end{theorem}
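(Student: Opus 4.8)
The plan is to realize the given splitting as the \emph{dual} of the action of $G=\pi_1(M)$ on a simplicial tree, in the spirit of \cite{C-S}, and then to normalize the resulting 2-suborbifold system into an essential one without destroying it. First I would translate the hypothesis into an action of $G=\pi_1(M)$ on a simplicial tree $T$, with no inversions, finite quotient graph of groups, and no global fixed point (nontriviality); replacing $T$ by its unique minimal $G$-invariant subtree, I may assume no edge is superfluous. Since $M$ is good, I pass to the orbifold universal cover $\widetilde M$, a manifold on which $G$ acts properly discontinuously with orbifold quotient $M$ and singular set the preimage of $\Sigma\subset M$. Because $T$ is contractible, there is a $G$-equivariant map $\widetilde f\colon\widetilde M\to T$: define it on the lifts of a fundamental domain (or on an equivariant spine) and extend by equivariance. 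I then put $\widetilde f$ in general position with respect to the midpoints of the edges of $T$, and, crucially, arrange it to be compatible with the orbifold structure along the singular preimage, so that the point preimages are honest suborbifolds.

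The preimage $P=\widetilde f^{-1}(\{\text{edge midpoints}\})$ is then a $G$-invariant, properly embedded surface in $\widetilde M$ that descends to a (possibly inessential) 2-suborbifold $S_0\subset M$. By construction the stabilizer of each component of $P$ is conjugate into an edge group, while each component $\widetilde Q$ of $\widetilde M\setminus P$ maps into the open star of a single vertex $v$ and is therefore preserved only by elements fixing $v$; thus $\operatorname{Stab}(\widetilde Q)\le\operatorname{Stab}(v)$, a vertex group. This already gives the desired combinatorial picture; what remains is to make $S_0$ essential.

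The simplification step is where the real work lies. $S_0$ may contain compressible components, trivial spherical components bounding discal 3-suborbifolds, and boundary-parallel components. I would remove these by equivariant compressions and isotopies, each realized by a homotopy of $\widetilde f$, using the orbifold loop theorem and sphere theorem that underlie \cite{fes} and \cite{kune} (the former handling the non-spherical pieces, the latter the spherical ones). A complexity on $(S_0,\widetilde f)$ — for instance the number of components together with a weight counting intersections of $P$ with a fixed equivariant triangulation — must be shown to drop under each move, guaranteeing termination at an essential system $S=S_1\cup\dots\cup S_n$. The hypotheses that $M$ has no non-separating spherical 2-orbifold and that each prime component has infinite fundamental group are exactly what prevent the spherical moves from running away or collapsing $S$ to nothing. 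Non-emptiness of the final system is forced by nontriviality of the action: were $S=\varnothing$, the map $\widetilde f$ would avoid every midpoint, hence land in one open star, giving a global fixed vertex — a contradiction.

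Finally, incompressibility makes each inclusion $Q\hookrightarrow M$ inject $\pi_1(Q)$ into $\pi_1(M)$ with image the stabilizer $\operatorname{Stab}(\widetilde Q)$ of a complementary component, which by the second paragraph lies in a vertex group; this yields the conclusion. The principal obstacle I anticipate is precisely the equivariant normalization near the singular locus $\Sigma$: making each compression a legitimate orbifold operation, keeping the moves $G$-equivariant, and simultaneously controlling termination and non-emptiness is the orbifold analog of normalizing a dual surface, and it is the crux of the argument.
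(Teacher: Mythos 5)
Your strategy is the classical Stallings--Culler--Shalen one: build a $G$-equivariant map from the universal cover to the Bass--Serre tree, take the preimage of the edge midpoints, and normalize the result equivariantly. The paper reaches the same dual surface by a different technical route: it constructs an orbifold composition $X$ (a graph of spaces whose vertex and edge pieces are the coverings of a re-glued prime decomposition $W$ of $M$ corresponding to the vertex and edge groups of the splitting), shows $\pi_1(X)\cong\pi_1(M)$, produces a b-continuous map $f:M\to X$ realizing this isomorphism (Proposition \ref{kune-Prop5.3'}), and then pulls back the slices $Y_j\times\{\frac{1}{2}\}$ using the Transversality Theorem \ref{trans}, which delivers incompressibility directly within a C-equivalence class. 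Working downstairs with $X$ lets the authors package the entire normalization into the statement that $O_1(X)$, $O_2(X)$, $O_3(X)$ are trivial (Proposition \ref{Oitrivial-prop}); your version keeps everything upstairs and equivariant, which is conceptually cleaner but pushes all of the difficulty into steps you have only named.

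Two of those steps are genuine gaps as written. First, the equivariant map $\widetilde f:\widetilde M\to T$ does not exist merely because $T$ is contractible: the $G$-action on $\widetilde M$ is not free, and at a point $x$ with finite stabilizer $\Gamma_x$ you are forced to send $x$ into $\mathrm{Fix}(\Gamma_x)$, so you need the fixed-point theorem for finite groups acting on trees without inversions (this is exactly the role of Proposition \ref{tree-new-Prop} and Lemma \ref{fes-Lem2.2} in the paper, and of the no-inversion hypothesis), and the skeleton-by-skeleton extension needs the relevant fixed-point sets to be contractible subtrees (Proposition \ref{fes-Prop2.1}), not just $T$ itself. Second, and more seriously, the ``complexity that drops under each move'' is asserted rather than produced: in the orbifold category the compressions require equivariant loop/sphere theorems along the singular locus, the spherical components are where the hypotheses on prime components and on non-separating spherical 2-suborbifolds must actually be invoked, and termination has to be controlled simultaneously with non-emptiness. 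This is precisely the content the paper isolates in Propositions \ref{kune-Prop5.8}, \ref{kune-Prop5.9}, \ref{Oitrivial-prop} and Theorem \ref{trans}; your proposal is a correct outline whose crux coincides exactly with the part you explicitly defer, so it cannot be counted as a complete proof.
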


Boileau, Maillot and Porti showed a related result in \cite[Proposition 7.16]{BMP}, where they treat with the fundamental group of the complement of the set of singular points of a 3-orbifold.

We summarize the contents of the present paper. In Section 2, 3 and 4, we review on the actions on a tree, 3-orbifolds, and OISIBO's (orbifold identified spaces identified along ballic orbifolds) respectively. In Section 5, we prepare an orbifold composition, which is used in Main Theorem as the target space of a b-continuous map. In Section 6, we prove Main Theorem.

\section{Preliminaries on the actions on a tree}

Throughout this present paper any orbifold is assumed to be good, that is, it is covered by a manifold, and assumed to be compact, connected and orientable unless otherwise stated.

In \cite{Se}, some fixed point theorems about group actions on trees are proved. Here we use the following restricted forms of them.

Let $T$ be a simplicial tree, i.e., a connected and simply connected 1-complex, and $G$ a group simplicially acting on $T$.

For $g\in G$, $g$ is called to have an {\it edge inversion\/} if there exists an edge $E$ such that $g(E)=E$ and $g|E$ is orientation reversing.

The action is called {\it trivial} if a vertex of $T$ is fixed by $\Gamma$.

\begin{proposition}\label{tree-new-Prop}
Let $g$ be an element of $G$ with finite order. If $g$ acts on $T$ without edge inversions, then there exists a vertex $p$ of $T$ such that $g(p)=p$.
\end{proposition}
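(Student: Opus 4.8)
The plan is to run the standard \emph{minimal displacement} argument, using finiteness of the order to exclude a nontrivial translation and the no-inversion hypothesis to upgrade an invariant midpoint to an honest fixed vertex. Write $n$ for the order of $g$, let $d$ denote the simplicial (edge-path) metric on $T$, and for a vertex $v$ set $\delta(v)=d(v,g(v))$. I will use freely the basic tree fact that the concatenation of two geodesics sharing exactly one endpoint is again a geodesic precisely when it does not backtrack at that common endpoint.

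First I would choose a vertex $p$ minimizing $\delta$ and put $\ell=\delta(p)$; the goal is to show $\ell=0$, which is exactly the assertion $g(p)=p$. Let $p=x_0,x_1,\dots,x_\ell=g(p)$ be the geodesic from $p$ to $g(p)$. Applying $g$ yields the geodesic $g(p)=g(x_0),\dots,g(x_\ell)=g^2(p)$ of the same length, meeting the first one only at the vertex $g(p)$. The whole proof turns on the junction there, controlled by whether the incoming edge $\{x_{\ell-1},x_\ell\}$ equals the outgoing edge $\{g(x_0),g(x_1)\}$, i.e.\ whether $x_{\ell-1}=g(x_1)$.

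Then I would split into two cases. If $x_{\ell-1}\neq g(x_1)$ (no backtracking), then since every junction $g^j(p)$ is a $g$-translate of this one, the same non-backtracking holds at each, so $p,g(p),\dots,g^n(p)$ concatenates to a geodesic of length $n\ell$; but $g^n=\mathrm{id}$ forces $g^n(p)=p$, whence $n\ell=d(p,p)=0$ and $\ell=0$. If instead $x_{\ell-1}=g(x_1)$ (backtracking), then for $\ell\geq 2$ one computes $\delta(x_1)=d(x_1,g(x_1))=d(x_1,x_{\ell-1})=\ell-2<\ell$, contradicting minimality of $\ell$; and for $\ell=1$ the backtracking condition says precisely that $g$ preserves the edge $\{p,g(p)\}$ while exchanging its endpoints, i.e.\ an edge inversion, which is excluded by hypothesis. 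Thus $\ell\geq 1$ is impossible in every case, so $\ell=0$.

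The crux, and the place I expect to need care, is the single junction analysis, where the two hypotheses are consumed in complementary ways: finiteness of the order is exactly what kills the no-backtracking alternative (it rules out $g$ translating along a line), while the no-inversion hypothesis is exactly what closes the residual $\ell=1$ backtracking case, since without it $g$ could simply flip one edge and fix no vertex. The only bookkeeping subtleties are verifying that the backtracking condition propagates $g$-equivariantly to all junctions and treating the degenerate case $\ell=1$ separately. (Alternatively one could quote Serre's dichotomy between elliptic and hyperbolic isometries together with the identity $\ell(g^m)=|m|\,\ell(g)$ for translation lengths, but the displacement argument above is self-contained.)
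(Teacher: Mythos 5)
Your argument is correct and complete. The paper itself gives no proof of Proposition \ref{tree-new-Prop}: it is stated as a restricted form of the fixed-point theorems in Serre's \emph{Trees} and simply cited, so there is no in-paper argument to compare against. Your minimal-displacement proof is essentially the standard one from that source: the minimum of $\delta$ exists because it is a nonnegative integer; the no-backtracking case correctly propagates by $g$-equivariance to every junction $g^j(p)$ and produces a reduced closed edge path of length $n\ell$ in a tree, forcing $\ell=0$; the backtracking case with $\ell\geq 2$ correctly contradicts minimality via $\delta(x_1)=\ell-2$; and the residual $\ell=1$ backtracking case is exactly an edge inversion, which the hypothesis excludes. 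The division of labor you identify --- finite order killing the translation alternative, no-inversion closing the $\ell=1$ case --- is precisely right, and both hypotheses are genuinely needed.
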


\begin{proposition}\label{fes-Prop2.1}
Let $p_1$, $p_2\in T$ be fixed points of $g\in G$ and $\ell$ the unique simple path from $p_1$ to $p_2$. Then any vertex and edge on $\ell$ are fixed by $g$.
\end{proposition}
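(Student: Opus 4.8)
The plan is to exploit the two defining features of a simplicial tree: between any two points there is a \emph{unique} simple path, and a simplicial automorphism preserves the combinatorial (edge-counting) distance. Since $g$ acts simplicially, it carries the simple path $\ell$ to a simple path $g(\ell)$ whose endpoints are $g(p_1)=p_1$ and $g(p_2)=p_2$. Because the simple path joining $p_1$ to $p_2$ is unique, I conclude $g(\ell)=\ell$ as a subset of $T$; that is, $g$ restricts to an automorphism of the arc $\ell$. This is the step where simple connectivity of $T$ is genuinely used, since in a graph with cycles an automorphism fixing two points could rotate an intervening loop.

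First I would set up the combinatorial parametrization of $\ell$. Let $v_0=p_1,v_1,\dots,v_k=p_2$ be the consecutive vertices met along $\ell$ (adjoining $p_1$ or $p_2$ as a terminal point if it lies in the interior of an edge). Each vertex $v_j$ is singled out among the points of $\ell$ by having distance $j$ from $p_1$ in the path metric of $T$. As $g$ is simplicial it is an isometry for this metric, and $g(p_1)=p_1$, so $g(v_j)$ is a point of $g(\ell)=\ell$ at distance $j$ from $p_1$. Since $\ell$ carries exactly one point at distance $j$ from $p_1$, I obtain $g(v_j)=v_j$ for every $j$.

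Having fixed all vertices of $\ell$, each edge $E_j=[v_{j-1},v_j]$ has both endpoints fixed; because $g$ is simplicial and maps $E_j$ into $\ell$, it must send $E_j$ to the unique edge of $\ell$ with those endpoints, namely $E_j$ itself, and it does so preserving orientation (it is not an inversion, since an inversion would interchange the endpoints). Thus every vertex and edge on $\ell$ is fixed by $g$. I expect the only point requiring care beyond the uniqueness argument is the degenerate case in which $p_1$ or $p_2$ is an interior edge point: there $g$ fixes an interior point of that edge, which forces it to fix the whole edge pointwise rather than invert it, so no special case arises. Note finally that, unlike Proposition \ref{tree-new-Prop}, this argument uses neither finiteness of the order of $g$ nor the absence of inversions — fixing the two endpoints of the path already forces $g|_\ell$ to be the identity.
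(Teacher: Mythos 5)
Your argument is correct and complete; the paper itself gives no proof of this proposition (it is presented as a restricted form of a fixed-point result quoted from Serre's \emph{Trees}), so there is nothing to compare against, and yours is the standard uniqueness-of-geodesics argument. One small caveat on your closing remark: fixing an interior point of an edge does not by itself rule out an inversion (an inversion fixes the midpoint); what actually rules it out in the degenerate case is that your distance argument already fixes a nondegenerate subsegment of that edge pointwise.
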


Let $n\geq 1$ be an integer. Put
$$G_n= \; \langle \; a_1,\cdots,a_n|\; a_1^{\alpha_1}=\cdots=a_n^{\alpha_n}=(a_ia_j)^{\beta_{i,j}}=1,\; 1\leq i<j\leq n\;\rangle $$
where $\alpha_i,\beta_{i,j}\geq 2$ are integers.

\begin{lemma}\label{fes-Lem2.2}
If $G_n$ acts on $T$ without edge inversions, then $T$ has a fixed vertex of $G_n$ action.
\end{lemma}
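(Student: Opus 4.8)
The plan is to exhibit a single vertex fixed by every generator $a_1,\dots,a_n$, since such a vertex is automatically fixed by all of $G_n$. First I would record the basic structure. Each generator $a_i$ has finite order $\alpha_i$, and each product $a_ia_j$ has finite order $\beta_{i,j}$, by the defining relations; moreover, since $G_n$ acts without edge inversions, so does each of these elements (an inversion performed by $a_i$ or by $a_ia_j$ would be an inversion performed by an element of $G_n$). Hence by Proposition~\ref{tree-new-Prop} each of the fixed-point sets $\mathrm{Fix}(a_i)$ and $\mathrm{Fix}(a_ia_j)$ is nonempty, and by Proposition~\ref{fes-Prop2.1} each $\mathrm{Fix}(g)$ is a subtree (it is convex, and, there being no inversions, it is a subcomplex). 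The lemma then reduces to two points: (i) the sets $\mathrm{Fix}(a_1),\dots,\mathrm{Fix}(a_n)$ intersect pairwise; and (ii) a finite family of pairwise-intersecting subtrees of a tree has nonempty total intersection. Given (i) and (ii), the common intersection is a nonempty subcomplex, hence contains a vertex fixed by all $a_i$.

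The crux is the pairwise statement (i), which I would prove by a product-of-elliptics argument. Fix $i\neq j$ and suppose, for contradiction, that $A=\mathrm{Fix}(a_i)$ and $B=\mathrm{Fix}(a_j)$ are disjoint. Let $[p,q]$ be the bridge between them, with $p\in A$, $q\in B$, and $m=d(p,q)=d(A,B)\geq 1$; by minimality $[p,q]$ meets $A$ only at $p$ and $B$ only at $q$. In particular $a_i$ cannot fix the initial edge $e$ of $[p,q]$ at $p$ (otherwise, acting without inversions, $a_i$ would fix the far endpoint of $e$, a point of $A$ on $[p,q]$ other than $p$), so $a_i$ sends $e$ to a distinct edge at $p$; likewise $a_j$ moves the initial edge of $[q,p]$ at $q$. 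I would then compute $a_ia_j(q)=a_i(q)$ and observe that $[q,p]\cup[p,a_i(q)]$ is a geodesic of length $2m$ with midpoint $p$, using that the incoming edge $e$ and outgoing edge $a_i(e)$ at $p$ are distinct so that no folding occurs. Finally, since $a_ia_j$ has finite order it fixes some vertex $z$ (Proposition~\ref{tree-new-Prop}); as $z$ is then equidistant from $q$ and $a_ia_j(q)$, the geodesics $[z,q]$ and $[z,a_ia_j(q)]$ diverge exactly at their common midpoint $p$, and comparing $[z,a_ia_j(q)]$ with $a_ia_j([z,q])$ forces $a_ia_j(p)=p$. Since $a_i(p)=p$, this gives $a_j(p)=p$, i.e.\ $p\in B$, contradicting $p\in A$ and $A\cap B=\varnothing$.

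For the Helly property (ii) I would use the median (center) $x$ of any three points $u\in Y\cap Z$, $v\in X\cap Z$, $w\in X\cap Y$ drawn from three pairwise-intersecting subtrees $X,Y,Z$: since $x$ lies on each of $[v,w]$, $[w,u]$, $[u,v]$, convexity (Proposition~\ref{fes-Prop2.1}) places $x$ in $X\cap Y\cap Z$. An easy induction on $n$, applying the three-subtree case to $F_i\cap F_j\cap F_n$ to show that the subtrees $F_i\cap F_n$ still pairwise intersect, then yields a common point of $\mathrm{Fix}(a_1),\dots,\mathrm{Fix}(a_n)$, completing the proof. I expect the main obstacle to be the no-backtracking bookkeeping in the displacement computation of the previous paragraph: verifying that the relevant geodesics through $p$ do not fold is exactly where the hypotheses ``no edge inversions'' and ``$[p,q]$ meets $A,B$ only at its endpoints'' are indispensable, and it is the step most prone to hidden case distinctions.
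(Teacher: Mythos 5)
Your proof is correct. The paper gives no argument for this lemma---it is stated as a restricted form of the fixed-point theorems in Serre's \emph{Trees}---and what you have written (each $a_i$ and each $a_ia_j$ is elliptic since it has finite order and acts without inversions; two elliptic elements with disjoint fixed subtrees compose to an element translating along the bridge, so finiteness of the order of $a_ia_j$ forces $\mathrm{Fix}(a_i)\cap\mathrm{Fix}(a_j)\neq\emptyset$; the Helly property for subtrees then produces a common fixed vertex) is precisely the standard proof of the corollary of Serre that the authors are invoking, with the no-backtracking and parity details handled correctly.
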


\section{Preliminaries on orbifolds}

\begin{definition}\label{contmap-def}
Let $M=(\tilde{M},p,|M|)$, $N=(\tilde{N},q,|N|)$ be orbifolds. A {\it continuous map} $f:M\rightarrow N$ is a pair $(|f|,\tilde{f})$ of continuous maps $|f|:|M|\rightarrow |N|$ and $\tilde{f}:\tilde{M}\rightarrow \tilde{N}$ which satisfies the following:
\begin{equation}\label{ofd-diag}
\CD
\tilde{M} @>\tilde{f}>> \tilde{N} \\
@VpVV @VVqV \\
|M| @>>|f|> |N| \\
\endCD
\end{equation}
\begin{enumerate}
\item[{\bf (i)}] $|f|\circ p=q\circ \tilde{f}$,
\item[{\bf (ii)}] For each $\sigma \in$ Aut$(\tilde{M},p)$ there exists an element $\tau\in$ Aut$(\tilde{N},q)$ such that $\tilde{f}\circ\sigma=\tau\circ\tilde{f}$.
\end{enumerate}
A continuous map $f:M\rightarrow N$ is {\it b-continuous\/} if there exists a point $x\in|M|-\Sigma M$ such that $|f|(x)\in|N|-\Sigma N$. It was called an orbi-map in \cite{finite,least}, etc. A b-continuous map induces a homomorphism between the fundamental groups and local fundamental groups of orbifolds, see \cite[Lemma 3.13]{t-homo} and \cite{finite}, where the points $x$ and $|f|(x)$ in the above could be base points of the fundamental groups of $M$ and $N$, respectively. The notion of (b-)continuous maps between orbifolds is naturally generalized for those between OISIBO's and orbifold compositions in Sections \ref{OISIBO-sec} and \ref{orbifoldcomposition-sec}.

A b-continuous map $f:M\rightarrow N$ is an {\it embedding} if $f(M)$ is a suborbifold of $N$ and $f:M\rightarrow f(M)$ is an isomorphism of orbifolds.
\end{definition}

For other terminologies, see \cite{finite}.

\section{Preliminaries on OISIBO's}\label{OISIBO-sec}

\begin{definition}\label{OISIBO}
Let $I$, $J$ be countable sets, $X_i$ ($i\in I$) n-orbifolds, and $B_j$ ($j\in J$) ballic n-orbifolds. Let $f_j^{\varepsilon}:B_j\rightarrow X_{i(j,\varepsilon)}$ be embeddings (as orbifolds) such that $f^{\varepsilon}_{j}(B_j)\subset$ Int $X_{i(j,\varepsilon)}$ and $f_j^{\varepsilon}(B_j)$ are mutually disjoint, where $j\in J$, $i(j,\varepsilon)\in I$, $\varepsilon =0,1$. Then we call $X=(X_i,B_j,f_j^{\varepsilon})_{i\in I,j\in J,\varepsilon=0,1}$ an {\it $n$-orbifold identified space identified along ballic orbifolds} ($n$-OISIBO). The maps $f_j^0\circ (f_j^1)^{-1}$ and their inverses are called the {\it identifying maps} of $X$. Each $X_i$, $B_j$ are called a {\it particle of} $X$, and an {\it identifying ballic orbifold}, respectively. We define the equivalence relation $\sim$ in $\coprod_{i\in I,j\in J}(|X_i|\cup |B_j|)$ to be generated by
\begin{equation}
y\sim|f|^{\varepsilon}_j(y),\quad\varepsilon=0,1,\quad y\in|B_j|,\quad j\in J.
\end{equation}
We call the identified space $\coprod_{i\in I,j\in J}(|X_i|\cup|B_j|)/\sim$ the {\it underlying space of} $X$, denoted by $|X|$, and call the identified space $\{(\cup_{i\in I}\Sigma X_i)\cup(\cup_{j\in J}\Sigma B_j)\}/\sim$ the {\it singular set of} $X$, denoted by $\Sigma X$.
\end{definition}

\begin{definition}\label{OISIBOiso}
Let $X=(X_i,B_j,f_j^{\varepsilon})_{i\in I,j\in J,\varepsilon=0,1}$ and $X'=(X_k',B_{\ell}',g_{\ell}^{\varepsilon})_{k\in K,\ell\in L,\varepsilon=0,1}$ be OISIBO's. We say that $X$ and $X'$ are {\it isomorphic} if there exists a set of maps $\{\varphi_i,\psi_j\}_{i\in I,j\in J}$ and bijections $\eta:I\to K$, $\xi:J\to L$ such that the following (i) and (ii) hold:
\begin{enumerate}
\item[{\bf (i)}] For each $i\in I$, $\varphi_i$ is an isomorphism (of orbifolds) from $X_i$ to $X_{\eta(i)}'$, and for each $j\in J$, $\psi_j$ is an isomorphism (of orbifolds) from $B_j$ to $B_{\xi(j)}'$.
\item[{\bf (ii)}] For each $j\in J$, and $\varepsilon =0,1$, $\varphi_{i(j,\varepsilon)}\circ f_j^{\varepsilon}=g_{\xi(j)}^{\varepsilon}\circ\psi_j$.
\end{enumerate}

The system $h=(\{\varphi_i,\psi_j\}_{i\in I,j\in J},\eta,\xi)$ is called an {\it isomorphism from} $X$ {\it to} $X'$.
\end{definition}

\begin{definition}\label{OISIBOcovering}
Let $X=(X_k,B_{\ell},f_{\ell}^{\varepsilon})_{k\in K,\ell\in L,\varepsilon=0,1}$ and $X'=(X_i',B_j',{f'}_j^{\varepsilon})_{i\in I,j\in J,\varepsilon=0,1}$ be OISIBO's. We say that $X'$ is a {\it covering of} $X$ if there exists a set of maps $\{\varphi_i,\psi_j\}_{i\in I,j\in J}$ and surjections $\eta:I\rightarrow K$, $\xi:J\rightarrow L$ such that the following (i) and (ii) hold:
\begin{enumerate}
\item[{\bf (i)}] Each $\varphi_i$ is a covering map (of orbifolds) from $X'_i$ to $X_{\eta(i)}$, where $\eta(i)\in K$, and each $\psi_j$ is a covering map (of orbifolds) from $B_j'$ to $B_{\xi(j)}$, where $\xi(j)\in L$.
\item[{\bf (ii)}] For each $j\in J$ and $\varepsilon =0,1$, $\varphi_{i(j,\varepsilon)}\circ {f'}_j^{\varepsilon}=f_{\xi(j)}^{\varepsilon}\circ\psi_j$.
\end{enumerate}
Note that the continuous map $|p|:|X'|\to|X|$ naturally induced by $\{\varphi_i,\psi_j\}_{i\in I,j\in J}$ is surjective, and induces the usual covering map from $|X'|-|p|^{-1}(\Sigma X)$ to $|X|-\Sigma X$. We call the system $p=(|p|,\{\varphi_i,\psi_j\}_{i\in I,j\in J})$
a {\it covering map from} $X'$ {\it to} $X$.
\end{definition}

\begin{definition}\label{OISIBOunicov}
Let $\tilde{X}$, $X$ be OISIBO's, and $p:\tilde{X}\to X$ a covering. We call $p$ a {\it universal covering} if for any covering $p':X'\to X$, there exists a covering $q:\tilde{X}\to X'$  such that $p=p'\circ q$. As the usual covering theory, for any OISIBO $X$, there exists a unique universal covering $p:\tilde{X}\to X$.
\end{definition}

\begin{definition}\label{OISIBOdeck}
Let $X'$, $X$ be OISIBO's, and $p:X'\to X$ a covering. We define the {\it deck transformation group} Aut$(X',p)$ {\it of} $p$ by
\begin{equation}
\text{Aut}(X',p)=\{ h:X'\to X'\;|\;h \hbox{ is an isomorphism such that } p\circ h=p\}.
\end{equation}
\end{definition}

We sometimes denote an OISIBO $X$ by $(\tilde{X},p,|X|)$, where $p:\tilde{X}\rightarrow X$ is the universal covering and $|X|$ is the underlying space of $X$. Any orbifold is considered as a special case of an OISIBO.

\begin{definition}\label{OISIBOcontmap-def2}
Let $X=(\tilde{X},p,|X|)$, $Y=(\tilde{Y},q,|Y|)$ be OISIBO's. A {\it continuous map} $f:X\rightarrow Y$ is a pair $(|f|,\tilde{f})$ of continuous maps $|f|:|X|\rightarrow |Y|$ and $\tilde{f}:\tilde{X}\rightarrow \tilde{Y}$ which satisfies the same property as (i) and (ii) in Definition \ref{contmap-def}. A continuous map $f:X\rightarrow Y$ is {\it b-continuous\/} if there exists a point $x\in|X|-\Sigma X$ such that $|f|(x)\in|Y|-\Sigma Y$.
\end{definition}

We define a {\it homotopy\/} of OISIBO's by using of continuous maps of OISIBO's as the usual homotopy. If the continuous maps at $0$ {\it and\/} $1$ levels of the homotopy are b-continuous, this homotopy is called a b-{\it homotopy}. See \cite{t-homo}.

We define a {\it path\/} in an OISIBO $X$ by using of a b-continuous map $\alpha=(|\alpha|,\tilde{\alpha}):[0,1]\rightarrow X$ with $|\alpha|(0)\in|X|-\Sigma X$. If a path $\alpha$ in $X$ satisfies that $|\alpha|(0)=|\alpha|(1)$, it is called a {\it loop\/} in $X$.

By using of loops in an OISIBO $X$, we define the fundamental group of $X$ as the usual theory. A b-continuous map $f:X\to Y$ between OISIBO's $X$ and $Y$ induces a homomorphism between the fundamental groups and local fundamental groups of $X$ and $Y$, where the points $x\in |X|-\Sigma X$ and $|f|(x)\in|Y|-\Sigma Y$ in the definition of b-continuous map are the base points of the fundamental groups of $X$ and $Y$, respectively.

As usual covering theory, various similar results holds such as the following:

\begin{proposition}
Let $X$ be an OISIBO and let $x$, $y$ be any two points of $|X|-\Sigma X$. Then the fundamental groups $\pi_1(X,x)$ and $\pi_1(X,y)$ are isomorphic.
\end{proposition}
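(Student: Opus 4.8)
The plan is to mimic the classical proof that the fundamental group of a space (built via covering theory) is independent of the choice of base point, adapting it to the OISIBO setting where paths and loops are defined through b-continuous maps. First I would fix base points $x,y\in|X|-\Sigma X$ and choose a path $\gamma$ in $X$ from $x$ to $y$; such a path exists because $|X|-\Sigma X$ is path-connected (the underlying space $|X|$ is connected by assumption, and $\Sigma X$, being the identified image of the singular sets of the particles, is a codimension-$\geq 2$ subset, so it does not separate $|X|$). Here $\gamma$ is realized as a b-continuous map $\gamma=(|\gamma|,\tilde\gamma):[0,1]\to X$ with $|\gamma|(0)=x$ and $|\gamma|(1)=y$, both lying in $|X|-\Sigma X$ as required by the definition of a path.

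Next I would define the conjugation map $\Phi_\gamma:\pi_1(X,x)\to\pi_1(X,y)$ by $\Phi_\gamma([\alpha])=[\bar\gamma\cdot\alpha\cdot\gamma]$, where $\bar\gamma$ denotes the reverse path and concatenation of b-continuous maps is taken in the usual way. The key verifications are that this is well-defined on b-homotopy classes, that it respects the group operation, and that $\Phi_{\bar\gamma}$ is a two-sided inverse to $\Phi_\gamma$, giving an isomorphism. Each of these reduces to a b-homotopy between the relevant concatenations; I would check that concatenation of b-continuous maps is again b-continuous (the interior condition on $|{\cdot}|$ is inherited at the chosen base points) and that the standard reparametrization b-homotopies $(\bar\gamma\cdot\gamma)\simeq\mathrm{const}_y$ and associativity hold at both the underlying-space level $|\cdot|$ and the universal-cover level $\tilde{\cdot}$ simultaneously, consistently with diagram \eqref{ofd-diag}.

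The main obstacle I expect is not the algebraic bookkeeping but ensuring that every map appearing in the argument is genuinely b-continuous and that all homotopies are b-homotopies in the sense defined above. Because the fundamental group of an OISIBO is built from b-continuous loops rather than arbitrary continuous loops, a careless concatenation or reparametrization could push the relevant interior point into $\Sigma X$ at the $0$ or $1$ level, invalidating the b-continuity hypothesis; one must arrange the base points and the path $\gamma$ so that all endpoints stay in $|X|-\Sigma X$ throughout. The lift $\tilde\gamma$ to the universal covering $\tilde X$ (which exists by Definition \ref{OISIBOunicov}) must also be tracked, so that the pair $(|\cdot|,\tilde{\cdot}\,)$ remains a legitimate continuous map of OISIBO's satisfying condition (ii) at each stage.

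Once these b-continuity and b-homotopy checks are in place, the remaining steps are formally identical to the manifold case, and I would conclude that $\Phi_\gamma$ is a well-defined isomorphism, hence $\pi_1(X,x)\cong\pi_1(X,y)$.
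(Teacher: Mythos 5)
Your proposal is correct and is exactly the standard change-of-basepoint argument that the paper itself invokes implicitly: the proposition appears in the paper without proof, under the remark that ``as usual covering theory, various similar results hold,'' and your conjugation map $\Phi_\gamma$ together with the b-continuity/b-homotopy bookkeeping is the intended justification. Your observation that $\Sigma X$ has codimension at least two (by the standing orientability assumption) and hence does not separate $|X|-\Sigma X$ is precisely the point that makes the classical argument go through in the OISIBO setting.
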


We often denote $\pi_1(X,x_0)$ by $\pi_1(X)$ dropping a base point if not necessary.

\begin{proposition}
Let $X$ be an OISIBO and $p:\tilde{X}\rightarrow X$ the universal covering of $X$. Then the fundamental group $\pi_1(X)$ is isomorphic to the deck transformation group {\rm Aut}$(\tilde{X},p)$.
\end{proposition}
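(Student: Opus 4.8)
The plan is to imitate the classical proof that the fundamental group is isomorphic to the deck transformation group of the universal covering, taking care that paths and base points in an OISIBO live in the non-singular part $|X|-\Sigma X$. First I would fix a base point $x_0\in|X|-\Sigma X$ and a point $\tilde{x}_0\in|\tilde{X}|-\Sigma\tilde{X}$ with $|p|(\tilde{x}_0)=x_0$. The first task is to establish, for the OISIBO universal covering $p:\tilde{X}\to X$, the two standard lifting properties: given a path $\alpha$ in $X$ with $|\alpha|(0)=x_0$, there is a unique lifted path $\tilde{\alpha}$ in $\tilde{X}$ starting over $\tilde{x}_0$; and given a b-homotopy between two loops based at $x_0$, the lifts share the same terminal point. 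Both are proved as in the manifold case on $|X|-\Sigma X$, where $|p|$ restricts to a genuine covering by Definition \ref{OISIBOcovering}; near $\Sigma X$ and near the identifying ballic orbifolds one uses the local model of an orbifold covering, patching the local lifts compatibly along the identifications by condition (ii) of Definition \ref{OISIBOcovering}.

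Next I would define $\Phi:\pi_1(X,x_0)\to\mathrm{Aut}(\tilde{X},p)$ as follows. For a loop $\alpha$ based at $x_0$, lift it to $\tilde{\alpha}$ starting over $\tilde{x}_0$ and let $\tilde{x}_1=\tilde{\alpha}(1)$, which again lies in the fiber over $x_0$. Because $\tilde{X}$ is the universal covering, hence simply connected, the deck transformation group acts simply transitively on this fiber, so there is a unique $h_\alpha\in\mathrm{Aut}(\tilde{X},p)$ with $|h_\alpha|(\tilde{x}_0)=\tilde{x}_1$; set $\Phi([\alpha])=h_\alpha$. Homotopy invariance of the terminal point makes $\Phi$ well defined on homotopy classes, and a direct check using the concatenation of lifted paths shows that $\Phi$ is a homomorphism, after the customary choice of composition convention for deck transformations.

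For injectivity, suppose $h_\alpha=\mathrm{id}$; then $\tilde{\alpha}$ is a loop in $\tilde{X}$, and since $\pi_1(\tilde{X})$ is trivial it is null-homotopic, so projecting the null-homotopy by $p$ gives $[\alpha]=1$. For surjectivity, take any $h\in\mathrm{Aut}(\tilde{X},p)$; since $|\tilde{X}|$ is path-connected and $|h|(\tilde{x}_0)$ again lies over $x_0$, hence in $|\tilde{X}|-\Sigma\tilde{X}$, I would join $\tilde{x}_0$ to $|h|(\tilde{x}_0)$ by a path, project it by $p$ to a loop $\alpha$ in $X$ based at $x_0$, and note that $h$ and $h_\alpha$ agree at $\tilde{x}_0$, whence $h=h_\alpha$ by the uniqueness above. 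Thus $\Phi$ is an isomorphism.

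The step I expect to be the main obstacle is the careful verification of unique path lifting and homotopy lifting across the identifying ballic orbifolds $B_j$: a path may repeatedly traverse the identifications $y\sim|f|_j^\varepsilon(y)$, and one must check that the locally defined orbifold-covering lifts patch together into a single well-defined lift in $\tilde{X}$, and that this patching is compatible with b-homotopies and respects the base-point constraint $|\alpha|(0)\in|X|-\Sigma X$. Once these lifting properties are secured, the rest is the standard covering-space argument carried over verbatim.
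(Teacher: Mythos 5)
Your proposal is correct and follows exactly the route the paper intends: the paper states this proposition without proof, under the heading that the usual covering-theory results carry over to OISIBO's, and your argument is precisely that classical argument (path/homotopy lifting, simple transitivity of the deck group on the fiber over a non-singular base point, injectivity from simple connectivity of $\tilde{X}$). The one subtlety you rightly flag --- uniqueness of lifts near $\Sigma X$ and across the identifying ballic orbifolds --- is handled in the paper's framework by the facts that a path is by definition a pair $(|\alpha|,\tilde{\alpha})$ already carrying its structure map, that the base point lies in $|X|-\Sigma X$ where $|p|$ restricts to an honest covering, and that unique lifting holds for paths meeting $\Sigma X$ at most at their endpoint.
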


\begin{proposition}
Let $X$ be an OISIBO. For each subgroup $H$ of $\pi_1(X)$, there exists a covering $p:\tilde{X}\rightarrow X$ such that the OISIBO $\tilde{X}$ has a fundamental group which is isomorphic to $H$.
\end{proposition}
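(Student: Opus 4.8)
The plan is to reproduce, in the OISIBO category, the classical construction that realises a prescribed subgroup as $\pi_1$ of an intermediate covering by quotienting the universal covering by that subgroup. First I would take the universal covering $p:\tilde X\to X$ furnished by Definition \ref{OISIBOunicov}, and use the preceding proposition to identify $\pi_1(X)$ with the deck transformation group $G=\mathrm{Aut}(\tilde X,p)$ of Definition \ref{OISIBOdeck}. Under this isomorphism the given subgroup $H\le\pi_1(X)$ becomes a subgroup $\hat H\le G$, i.e.\ a group of self-isomorphisms of the OISIBO $\tilde X$ that cover the identity of $X$.

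The heart of the argument is to form the quotient OISIBO $X':=\tilde X/\hat H$ and to show it is a covering of $X$. Writing $\tilde X=(\tilde X_k,\tilde B_\ell,\tilde f_\ell^{\varepsilon})$, each element of $\hat H$ is a system $(\{\varphi_k,\psi_\ell\},\eta,\xi)$ in the sense of Definition \ref{OISIBOiso}, so $\hat H$ acts on the index sets, permuting particles and identifying ballic orbifolds. For the particles of $X'$ I would take, one per $\hat H$-orbit, the quotient of a representative $\tilde X_k$ by its setwise stabiliser; this stabiliser acts through orbifold automorphisms, namely as a subgroup of the deck transformations of the orbifold covering $\tilde X_k\to X_{\eta(k)}$ obtained by restricting $p$, so the quotient is again a good orbifold. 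I would do the same for the ballic orbifolds, using that a quotient of a ballic orbifold by such an action is again ballic. The gluing maps $\tilde f_\ell^{\varepsilon}$ descend to these quotients, and since $\hat H$ preserves interiors and disjointness the descended maps remain embeddings of ballic orbifolds into mutually disjoint interiors. This is precisely the data of an OISIBO, and the orbit projections assemble into a continuous map $q:\tilde X\to X'$.

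Next I would verify that $q$ is a covering in the sense of Definition \ref{OISIBOcovering}: the particle- and ballic-orbifold projections are orbifold coverings, giving condition (i), while condition (ii) holds because descending to the quotient was defined compatibly with the gluing maps. As $\hat H$ refines the $G$-action, $p$ factors as $p=p'\circ q$ with $p':X'\to X$ an induced covering. To compute the fundamental group, note that $\tilde X$ has trivial fundamental group and $q$ is a covering, so $\tilde X$ is the universal covering of $X'$; moreover $\tilde X\to\tilde X/\hat H$ is a regular covering whose deck transformation group is exactly $\hat H$, i.e.\ $\mathrm{Aut}(\tilde X,q)=\hat H$. Applying once more the proposition identifying the fundamental group of an OISIBO with the deck transformation group of its universal covering yields $\pi_1(X')\cong\mathrm{Aut}(\tilde X,q)=\hat H\cong H$, as required.

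The step I expect to be the main obstacle is the verification that $X'=\tilde X/\hat H$ is genuinely an OISIBO: that the orbit quotients of the particles and of the identifying ballic orbifolds exist and keep the required types, and that the descended gluing maps are still embeddings into disjoint interiors. Once this is in place, checking the covering axioms (i) and (ii) for $q$ and $p'$, together with the identification $\mathrm{Aut}(\tilde X,q)=\hat H$, are formal consequences of the covering framework already established.
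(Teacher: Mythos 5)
The paper states this proposition without proof, merely as one of several facts that hold ``as usual covering theory''; your construction---quotienting the universal covering $\tilde X$ by the subgroup $\hat H\le\mathrm{Aut}(\tilde X,p)$ corresponding to $H$, checking the quotient is an OISIBO, and identifying $\pi_1(\tilde X/\hat H)$ with $\hat H$ via the deck-group proposition---is exactly the standard argument the authors have in mind, and it is correct. You also correctly isolate the only point needing real verification in this category, namely that the orbit quotients of particles and identifying ballic orbifolds, together with the descended gluing maps, again satisfy Definition \ref{OISIBO}.
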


\section{Orbifold compositions}\label{orbifoldcomposition-sec}

\begin{definition}\label{composition-def}
Let $I$, $J$ be countable sets, $X_i$ ($i\in I$) and $Y_j$ ($j\in J$) be n-OISIBO's. Let $f_j^{\varepsilon}:Y_j\times \varepsilon \rightarrow X_{i(j,\varepsilon)}$ be b-continuous-maps, $f^{\varepsilon}_j=(|f^{\varepsilon}_j|,\tilde{f}^{\varepsilon}_j)$, such that $(f^{\varepsilon}_{j})_*$ are monic, where $j\in J$, $i(j,\varepsilon)\in I$, $\varepsilon =0,1$. Then we call $X=(X_i,Y_j\times[0,1],f_j^{\varepsilon})_{i\in I,j\in J,\varepsilon=0,1}$ an {\it n-dimensional orbifold composition {\rm (}of type III {\rm )}}. The maps $f_j^{\varepsilon}$ are called the {\it attaching maps} of $X$, which may have intersections and self-intersections. Each $X_i$, $Y_j\times[0,1]$ is called a {\it component of} $X$. The equivalence relation $\sim$ in $\coprod_{i\in I,j\in J}(|X_i|\cup (|Y_j|\times[0,1]))$ is defined to be generated by
\begin{equation}
(y,\varepsilon)\sim|f^{\varepsilon}_j|(y,\varepsilon),\quad\varepsilon=0,1,\quad y\in|Y_j|,\quad j\in J.
\end{equation}
We call the identified space $\coprod_{i\in I,j\in J}\bigl(|X_i|\cup(|Y_j|\times[0,1])\bigr)/\sim$ the {\it underlying space of} $X$, denoted by $|X|$, and call the identified space $\{(\cup_{i\in I}\Sigma X_i)\cup(\cup_{j\in J}\Sigma(Y_j\times[0,1]))\}/\sim$ the {\it singular set of} $X$, denoted by $\Sigma X$.
\end{definition}

In the definition of an orbifold composition, each $(f^{\varepsilon}_j)_*$ is monic, so that we can obtain the unique lift of any path $\ell$ in $X$ such that $\ell[0,1)\cap\Sigma X=\emptyset$.

We define a covering of an orbifold composition as similar as that of an OISIBO. We sometimes denote an orbifold composition $X$ by $(\tilde{X},p,|X|)$, where $p:\tilde{X}\rightarrow X$ is the universal covering and $|X|$ is the underlying space of $X$. Any orbifold and any OISIBO are considered as special cases of an orbifold composition.

\begin{definition}\label{compositioncontmap-def2}
Let $X=(\tilde{X},p,|X|)$, $Y=(\tilde{Y},q,|Y|)$ be orbifold compositions. A {\it continuous map} $f:X\rightarrow Y$ is a pair $(|f|,\tilde{f})$ of continuous maps $|f|:|X|\rightarrow |Y|$ and $\tilde{f}:\tilde{X}\rightarrow \tilde{Y}$ which satisfies the same property as (i) and (ii) in Definition \ref{contmap-def}. A continuous map $f:X\rightarrow Y$ is {\it b-continuous\/} if there exists a point $x\in|X|-\Sigma X$ such that $|f|(x)\in|Y|-\Sigma Y$.
\end{definition}

By using of loops in an orbifold composition $X$, we define the fundamental group of $X$ as the usual theory. A b-continuous map $f:X\to Y$ between orbifold compositions $X$ and $Y$ induces a homomorphism between the fundamental groups and local fundamental groups of $X$ and $Y$, where the points $x\in |X|-\Sigma X$ and $|f|(x)\in|Y|-\Sigma Y$ in the definition of b-continuous map are the base points of the fundamental groups of $X$ and $Y$, respectively.

\begin{definition}\label{oix}
Let $X$ be an orbifold composition. Define $O_i(X)$, $i=1,2,3$ as follows:
\begin{align*}
O_1(X) &= \{ f:S^1\to X \hbox{, a b-continuous map } | \; [f] \hbox{ is of finite order } (\neq 1) \hbox{ in } \pi_1(X)\}, \\
O_2(X) &= \{ f:S\to X \hbox{, a b-continuous map } | \; S \hbox{ is a spherical 2-orbifold} \}, \\
O_3(X) &= \{ f:\mathcal{D}B\to X \hbox{, a b-continuous map } | \; \mathcal{D}B \hbox{ is the double of a ballic 3-orbifold } B \}.
\end{align*}

A b-continuous map $f:S^1\rightarrow X\in O_1(X)$ is {\it trivial} if there exists a b-continuous map $g$ from a discal 2-orbifold $D$ to $X$ such that $g|\partial D=f$ and the index of $D$ equals to the order of $[f]$. $O_1(X)$ is {\it trivial} if every element of $O_1(X)$ is trivial. We call $f:S\to X\in O_2(X)$ {\it trivial} if there exists a b-continuous map $g:c*S\rightarrow X$ such that $g|S=f$, where $c*S$ is the cone on $S$. $O_2(X)$ is {\it trivial} if every element of $O_2(X)$ is trivial. We define the triviality of $O_3(X)$ similarly.

Note that if $O_i(X)$ is trivial, then any covering $\tilde{X}$ of $X$ inherits the triviality.
\end{definition}

\begin{remark}
Let $X$ be an orbifold composition, and $\tilde{X}$ the universal cover of $X$. If $O_2(X)$ is trivial, then $\pi_2(\tilde{X})=0$.
\end{remark}

\begin{proposition}\label{kune-Prop5.3'}
Let $M$ be a compact 3-orbifold, and $X$ an orbifold composition. If $O_1(X)$ and $O_2(X)$ are trivial, then for any homomorphism $\varphi:\pi_1(M)\rightarrow\pi_1(X)$, there exists a b-continuous map $f:M\rightarrow X$ such that $f_*=\varphi$.
\end{proposition}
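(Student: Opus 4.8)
\emph{Proof proposal.} The plan is to build $f$ cell by cell by orbifold obstruction theory, constructing $|f|$ and its equivariant lift $\tilde f$ simultaneously and extending skeleton by skeleton over a cell decomposition of $M$ adapted to its singular set $\Sigma M$. First I would fix a handle (cell) decomposition of $M$ in which $\Sigma M$ is a subcomplex and read off from it a presentation of $\pi_1(M)$: generators coming from the $1$-cells, including the meridians $\mu_e$ of the singular edges $e$ subject to the relations $\mu_e^{n_e}=1$, where $n_e$ is the branching order of $e$, and relations coming from the $2$-cells. I would send the base point to a chosen generic point of $X$ and, over the $1$-skeleton, realize each generator $g$ by a b-continuous loop representing $\varphi(g)\in\pi_1(X)$, arranging these loops to meet $\Sigma X$ only as forced.

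The crucial step is extending over the cells meeting $\Sigma M$. Around a singular edge $e$ of order $n_e$, the meridian $\mu_e$ has finite order dividing $n_e$, so $\varphi(\mu_e)$ is a finite-order element of $\pi_1(X)$ and determines an element of $O_1(X)$. By the triviality of $O_1(X)$ this bounds a b-continuous map $g:D\to X$ from a discal $2$-orbifold $D$ whose cone index equals the order of $\varphi(\mu_e)$. This lets me extend $f$ across a transverse meridian disk to $e$ as a b-continuous map carrying the cone point of $D$ into $\Sigma X$ with the correct local group, so that the induced map on the local fundamental group agrees with $\varphi$. Performing this compatibly along each singular edge, and over the singular vertices using the local product/cone structure, extends $f$ over a neighborhood of $\Sigma M$ and then over the whole $2$-skeleton: every remaining $2$-cell corresponds to a relation of $\pi_1(M)$, so its boundary maps to a null-homotopic b-loop in $X$, which can be capped.

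Finally I would extend over the $3$-cells. A map defined on the boundary $2$-sphere of a $3$-cell represents, after passing to universal covers, a class in $\pi_2(\tilde X)$, and this group vanishes because $O_2(X)$ is trivial, as recorded in the Remark above. Hence every $3$-cell can be filled and the construction terminates with a b-continuous $f:M\to X$ which, by construction, sends generic points to generic points, admits an equivariant lift $\tilde f$, and realizes $\varphi$ both on $\pi_1$ and on the local fundamental groups.

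I expect the main obstacle to be the singular $1$- and $2$-skeleton step rather than the top-dimensional one. The discal fillings furnished by the triviality of $O_1(X)$ must be made globally consistent along the whole singular graph and equivariant on $\tilde X$, so that $\tilde f$ is genuinely well defined and $f_*=\varphi$ on the nose; the $3$-cell extension, by contrast, is immediate once $\pi_2(\tilde X)=0$.
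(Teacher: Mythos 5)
Your proposal is correct and follows essentially the same route as the paper's (very brief) proof: a skeleton-by-skeleton extension in which the triviality of $O_1(X)$ supplies the discal fillings needed to extend over a neighborhood of $\Sigma M$, and the triviality of $O_2(X)$ (via $\pi_2(\tilde X)=0$) handles the top-dimensional cells. The only cosmetic difference is that the paper first builds the map on $M_0=|M|-\mathrm{Int}\,U(\Sigma M)$ and then extends over the singular neighborhood, whereas you interleave the singular cells into the skeletal induction; the content is the same.
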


\begin{proof}
Let $M_0=|M|-$ Int $U(\Sigma M)$, where $U(\Sigma M)$ is the small regular neighborhood of $\Sigma M$. We can construct a b-continuous map from the $1$-skelton of $M_0$ to $X$ associated with $\varphi$. Since $O_1(X)$ and $O_2(X)$ are trivial, this b-continuous map is extendable to $M_0$ and furthermore to $M$, that is, we have obtained the desired b-continuous map.
\end{proof}

\begin{proposition}\label{kune-Prop5.5'}
Let $X$ be an orbifold composition, and $f:S^1\to X$ a b-continuous map. If$\;$ {\rm Fix}$([f]_A)\ne\phi$, then $f$ is extendable to a b-continuous map from a discal 2-orbifold $D$ to $X$ where $D=D^2(n)$, and $n$ is the order of $[f]_A$.
\end{proposition}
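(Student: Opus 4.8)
The plan is to realize the fixing of $[f]_A$ geometrically inside the universal cover $\tilde{X}$, and then to fold the resulting $n$-fold symmetric disk back down to $X$, where the symmetry turns into the cone point of $D^2(n)$. Write $g=[f]_A\in\mathrm{Aut}(\tilde{X},p)$ for the deck transformation corresponding to $[f]$ under the identification $\pi_1(X)\cong\mathrm{Aut}(\tilde{X},p)$, and fix a point $\tilde{x}\in\mathrm{Fix}(g)$. Since the stabilizer of a point in the universal cover of a good orbifold composition is finite, $g$ has finite order; call it $n$. First I would lift $f$ to a path $\tilde{f}:[0,n]\to\tilde{X}$ (the lift of $f$ traversed $n$ times) normalized so that $g\,\tilde{f}(t)=\tilde{f}(t+1)$ for all $t$. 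Because $g^n=1$ we have $\tilde{f}(n)=g^n\tilde{f}(0)=\tilde{f}(0)$, so $\tilde{f}|_{[0,n]}$ is a loop in $\tilde{X}$ which $g$ translates by one fundamental interval.

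Next, choose a path $\tilde{\gamma}$ in $\tilde{X}$ from $\tilde{x}$ to $\tilde{f}(0)$. The concatenation $\tilde{\gamma}*\tilde{f}|_{[0,1]}*(g\tilde{\gamma})^{-1}$ is a loop based at $\tilde{x}$, hence null-homotopic since $\tilde{X}$ is simply connected. I would use this null-homotopy to fill a $2$-simplex $\Phi_0:T\to\tilde{X}$ whose apex maps to $\tilde{x}$, whose outer edge is $\tilde{f}|_{[0,1]}$, and whose two inner edges are $\tilde{\gamma}$ and $g\tilde{\gamma}$. Applying powers of $g$, define $\Phi_k=g^k\circ\Phi_0$ on a copy $T_k$ of $T$, for $k=0,\dots,n-1$. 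By construction the inner-right edge of $T_k$, namely $g^{k}(g\tilde{\gamma})=g^{k+1}\tilde{\gamma}$, agrees with the inner-left edge $g^{k+1}\tilde{\gamma}$ of $T_{k+1}$; and since $g^n\tilde{\gamma}=\tilde{\gamma}$ the family closes up. Gluing the $T_k$ along these shared edges yields an abstract disk $\tilde{D}$ with a map $\tilde{\Phi}:\tilde{D}\to\tilde{X}$ whose outer boundary is precisely $\tilde{f}|_{[0,n]}$.

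By design $g$ permutes the sectors cyclically, $g(T_k)=T_{k+1}$, fixing the central apex, so $\langle g\rangle\cong\mathbb{Z}/n$ acts on $\tilde{D}$ as rotation and $\tilde{\Phi}$ is $\langle g\rangle$-equivariant (indeed $g\circ\Phi_k=\Phi_{k+1}$). Passing to the quotient by $\langle g\rangle\subset\mathrm{Aut}(\tilde{X},p)$, the disk $\tilde{D}$ becomes the rotation quotient $D^2(n)$, a discal $2$-orbifold with a single cone point of order $n$ at the image of $\tilde{x}$, and $\tilde{\Phi}$ descends to a map $\bar{\Phi}:D^2(n)\to X$. Its restriction to $\partial D^2(n)$ is exactly $f$ (the $\mathbb{Z}/n$-quotient of $\tilde{f}|_{[0,n]}$), so $\bar{\Phi}$ is the required extension, and b-continuity is inherited from that of $f$ along the boundary circle, which lies entirely in $|D^2(n)|-\Sigma D^2(n)$.

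The two steps I expect to be delicate are the filling and the quotient identification. First, one must promote the purely topological null-homotopy supplied by simple-connectivity of $\tilde{X}$ to an honest continuous map of orbifold compositions, keeping track simultaneously of the underlying-space map and its lift and invoking the orbifold homotopy machinery of \cite{t-homo}; this is where the real work lies. Second, one must verify that the quotient of $\tilde{D}$ by the rotation $\langle g\rangle$ is genuinely $D^2(n)$ with cone order equal to the order of $g$, which rests on $g$ having a well-defined finite order fixing $\tilde{x}$. Once these are in hand, the gluing combinatorics and the equivariance of $\tilde{\Phi}$ are routine.
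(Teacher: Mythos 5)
Your proposal is correct and is essentially the paper's own argument made explicit: the paper's proof maps the cone point of the universal cover $D^2\to D^2(n)$ to a point of $\mathrm{Fix}([f]_A)$ and performs a ``skeletonwise and equivariant extension,'' which is exactly your decomposition of $D^2$ into the $n$ sectors $T_k=g^kT_0$ with radial edges $g^k\tilde{\gamma}$ and $2$-cells filled by simple connectivity of $\tilde{X}$. The delicate points you flag (promoting the null-homotopy to a map of orbifold compositions, and identifying the rotation quotient with $D^2(n)$) are precisely what the paper's phrase ``equivariant extension'' is absorbing, so there is no substantive difference.
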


\begin{proof}
Let $q:D^2\to D$ be the universal covering. Choose a point $x\in$ Fix$([f]_A)$. We can construct the structure map of the desired b-continuous map by mapping the cone point of $D^2$ to $x$ and performing the skeletonwise and equivariant extension.
\end{proof}

Let $S$ be a spherical 2-orbifold and let $q:(\tilde{S},\tilde{x}_0)\to (S,x_0)$, $x_0\not\in \Sigma S$, be the universal covering. Let $\tau$ be  an element of $\pi_1(S,x_0)$, $\tilde{x}_{\tau}$ one of the two points of Fix$(\tau_A)$, and $x_{\tau}=q(\tilde{x}_{\tau})$. By the symbol $\mu(x_{\tau})$, we shall mean the local normal loop around $x_{\tau}$. Let $\ell$ be a path in $|S|-\Sigma S$ from $\mu(x_{\tau})(0)$ to $x_0$ such that $\tau=([\ell^{-1}\cdot\mu(x_{\tau})\cdot\ell])^k$, $k\in\mathbb{Z}$

\begin{proposition}\label{kune-Prop5.6'}
Let $S$ be a spherical 2-orbifold, $X$ an orbifold composition, and $f:S\to X$ a b-continuous map. Suppose that there exists a point $\tilde{d}\in$ {\rm Fix}$(f_*\pi_1(S))_A$, and for any $\tau\in\pi_1(S,x_0)$ there exists an interval $\ell_{\sigma}$ including $\tilde{d}$ and $\tilde{f}(\tilde{x}_{\tau})$ which is fixed by $\sigma_A$, where $\sigma=f_*(\tau)$. If $\pi_2$ of the universal cover $\tilde{X}$ of $X$ is $0$, then $f$ is extendable to a b-continuous map from the cone on $S$ to $X$.
\end{proposition}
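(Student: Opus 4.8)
```latex
The plan is to extend $f:S\to X$ over the cone $c*S$ by building the
structure map equivariantly on the universal covers and then descending.
Write $S=(\tilde S,q,|S|)$ with $\tilde S=S^2$, and recall that $c*S$ has
underlying space the cone on $|S|$ with a single cone point $v$ whose
local group is all of $\pi_1(S,x_0)$. The structure map $\widetilde{c*S}$
of the cone is then the cone $c*\tilde S$ on which $\pi_1(S)$ acts by
suspending its action on $\tilde S$, fixing the apex. So the task is
really to produce a $\pi_1(S)$-equivariant map $c*\tilde S\to\tilde X$
(equivariant via $\sigma\mapsto\sigma_A$, $\sigma=f_*(\tau)$) extending
$\tilde f$ on the base $\tilde S$, and sending the apex to the given fixed
point $\tilde d\in\mathrm{Fix}(f_*\pi_1(S))_A$.

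First I would fix the apex: send the cone point of $c*\tilde S$ to
$\tilde d$, which is legitimate precisely because $\tilde d$ is fixed by
every $\sigma_A$ in $f_*\pi_1(S)$, so this single choice is automatically
equivariant. Next I would handle the cone lines over the distinguished
fixed points. For each $\tau\in\pi_1(S,x_0)$ the point $\tilde x_\tau$ is
one of the two points of $\mathrm{Fix}(\tau_A)$ on $\tilde S$, and by
hypothesis there is an interval $\ell_\sigma$ containing both $\tilde d$
and $\tilde f(\tilde x_\tau)$ that is fixed by $\sigma_A$. I would map the
cone segment joining the apex to $\tilde x_\tau$ onto this interval
$\ell_\sigma$; since $\sigma_A$ fixes $\ell_\sigma$ pointwise (or at least
setwise, which is what is needed to keep the map defined on the
$\langle\tau\rangle$-orbit), this is consistent with the
$\langle\tau\rangle$-action, and compatibility across different $\tau$ at
the shared apex is guaranteed because they all meet at $\tilde d$. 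This
gives an equivariant map already defined on the $1$-skeleton of the cone
relative to $\tilde S$.

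The remaining step is the skeletonwise equivariant extension over the
higher cells of $c*\tilde S$. Here I would invoke the hypothesis
$\pi_2(\tilde X)=0$: an equivariant map defined on the cone over a
$1$-complex's worth of rays extends over the $2$- and $3$-cells of the
cone exactly when the relevant homotopy groups of $\tilde X$ vanish in the
obstructing dimensions, and the vanishing of $\pi_2(\tilde X)$ kills the
primary obstruction to filling in the cone, just as in the proofs of
Propositions \ref{kune-Prop5.5'} and \ref{kune-Prop5.6'}. Because
$\tilde X$ is simply connected and $\pi_2(\tilde X)=0$, each cell of the
cone can be filled equivariantly over a fundamental domain and then spread
by the $\pi_1(S)$-action, the equivariance on the boundary having been
arranged above. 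Having built $\tilde g:c*\tilde S\to\tilde X$
equivariantly, the underlying map $|g|:c*|S|\to|X|$ descends by passing to
quotients, and b-continuity follows since $\tilde g$ extends $\tilde f$
and $f$ was already b-continuous on $S$.

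The main obstacle I anticipate is verifying the global consistency of the
cone-segment assignments: the intervals $\ell_\sigma$ are chosen
separately for each $\tau$, so I would need to check that on the
overlaps---i.e. on products $\tau\tau'$ and on the full orbit structure of
the $\tilde x_\tau$ under $\pi_1(S)$---the segment images patch together to
a genuinely equivariant map rather than merely a collection of compatible
pieces over individual cyclic subgroups. Controlling this amounts to
checking that the choices can be made simultaneously
$\pi_1(S)$-equivariantly, which is where the fact that \emph{all} the
$\ell_\sigma$ share the common fixed endpoint $\tilde d$ does the real
work; the vanishing of $\pi_2$ then takes care of everything above
dimension one.
```
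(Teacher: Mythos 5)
Your proposal is correct and follows essentially the same route the paper intends: the paper defers the details to [T-Y~3, Proposition 5.6], but the construction it gestures at (as in its proof of Proposition \ref{kune-Prop5.5'}) is exactly yours --- send the cone point to the fixed point $\tilde d$, map the singular cone rays into the fixed intervals $\ell_\sigma$, propagate over orbits by equivariance, and perform the skeletonwise equivariant extension whose only obstruction is killed by $\pi_2(\tilde X)=0$. One small correction: the image of the ray to $\tilde x_\tau$ must be fixed \emph{pointwise} by $\sigma_A$, not merely setwise, since that ray is pointwise fixed by $\tau_A$ in $c*\tilde S$ and equivariance then forces $\sigma_A$ to fix its image pointwise (this is what makes the quotient a well-defined orbifold map, and it is exactly what the hypothesis on $\ell_\sigma$ supplies).
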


\begin{proof}
The proof is similar to that of \cite[Proposition 5.6]{kune}.
\end{proof}

\begin{proposition}\label{kune-Prop5.7'}
Let $\mathcal{D}B$ be the double of a ballic 3-orbifold $B$, $X$ an orbifold composition, and $f:\mathcal{D}B\to X$ a b-continuous map. Suppose that {\rm Fix}$(f_*\pi_1(\partial B))_A$ is connected, and for $\tau\in\pi_1(\partial B,x_0)$, $\pi_1(${\rm Fix}$(f_*((\tau))_A))=1$ and there exists an interval $\ell_{\sigma}$ including $\tilde{d}$ and $\tilde{f}(\tilde{x}_{\tau})$ which is fixed by $\sigma_A$, where $\sigma=f_*(\tau)$. If $\pi_2$ and $\pi_3$ of the universal cover $\tilde{X}$ of $X$ is $0$, then $f$ is extendable to a b-continuous map from the cone on $\mathcal{D}B$ to $X$.
\end{proposition}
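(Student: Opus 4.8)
The plan is to adapt the skeleton-wise equivariant extension used for \cite[Proposition 5.6]{kune} (the model for Proposition \ref{kune-Prop5.6'}) to the one-higher-dimensional setting, using $\pi_2(\tilde X)=0$ and $\pi_3(\tilde X)=0$ to kill the obstructions in the top two dimensions. Write $q:\widetilde{\mathcal{D}B}\to\mathcal{D}B$ and $p:\tilde X\to X$ for the universal coverings and let $\tilde f:\widetilde{\mathcal{D}B}\to\tilde X$ be the structure map of $f$. Constructing a b-continuous extension $g:c*\mathcal{D}B\to X$ is the same as constructing a structure map $\tilde g:c*\widetilde{\mathcal{D}B}\to\tilde X$ restricting to $\tilde f$ on $\widetilde{\mathcal{D}B}$ and equivariant for the deck action, i.e. $\tilde g\circ\sigma_A=(f_*\sigma)_A\circ\tilde g$ for all $\sigma\in\pi_1(\mathcal{D}B)$; here the cone point $c$ is the unique point of $c*\widetilde{\mathcal{D}B}$ fixed by the whole deck group.

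The first step is to locate $\tilde g(c)$. Since $B$ is ballic, $\partial B\hookrightarrow B$ induces an isomorphism $\pi_1(\partial B)\cong\pi_1(B)$ onto a finite group $\Gamma$, and the van Kampen computation for the double gives $\pi_1(\mathcal{D}B)\cong\Gamma *_\Gamma\Gamma\cong\Gamma\cong\pi_1(\partial B)$; so $\widetilde{\mathcal{D}B}=S^3$ and $c*\widetilde{\mathcal{D}B}=D^4$ with $\Gamma$ acting and fixing $c$. In particular $\mathrm{Fix}(f_*\pi_1(\mathcal{D}B))_A=\mathrm{Fix}(f_*\pi_1(\partial B))_A$, which is exactly why the hypotheses are phrased through $\pi_1(\partial B)$: equivariance forces $\tilde g(c)$ to be fixed by the full image $f_*\pi_1(\mathcal{D}B)$, and the connectedness of $\mathrm{Fix}(f_*\pi_1(\partial B))_A$ lets me set $\tilde g(c)=\tilde d$ there consistently.

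Next I would fix a $\Gamma$-equivariant triangulation of $S^3=\widetilde{\mathcal{D}B}$ and extend $\tilde g$ over the cone simplices $c*\sigma$ one dimension at a time, defining it on a fundamental domain and propagating by the deck action. Over the $1$-cells $c*v$ I join $\tilde d$ to $\tilde f(v)$ by a path, choosing for $v$ near the singular locus the interval $\ell_\sigma$ that contains $\tilde d$ and $\tilde f(\tilde x_\tau)$ and is fixed by $\sigma_A$, so that the choice commutes with the action. Over the $2$-cells the bare obstruction lies in $\pi_1(\tilde X)=1$; the genuine issue is equivariance near singular directions, and the hypothesis $\pi_1(\mathrm{Fix}(f_*(\tau)_A))=1$ guarantees that the relevant loops bound inside the fixed set so the filling can be made compatible with $\sigma_A$. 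Over the $3$-cells and the $4$-cells the obstructions lie in $\pi_2(\tilde X)=0$ and $\pi_3(\tilde X)=0$ respectively --- recall $c*\mathcal{D}B$ is four-dimensional, which is precisely why both vanishing hypotheses appear --- so the extension exists and can be translated over $\Gamma$-orbits to an equivariant one. Passing to underlying spaces yields $|g|$, and the pair $(|g|,\tilde g)$ is the desired b-continuous extension.

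The main obstacle is the equivariance bookkeeping in dimensions $1$ and $2$: the extension over the top cells is a formal consequence of $\pi_2(\tilde X)=\pi_3(\tilde X)=0$, but in the low dimensions one must fill cells and simultaneously respect the $\Gamma$-action along the singular strata of $\mathcal{D}B$. It is exactly the connectedness of $\mathrm{Fix}(f_*\pi_1(\partial B))_A$ (to place and connect $\tilde d$) together with the simple-connectivity of each $\mathrm{Fix}(f_*(\tau)_A)$ (to fill the $2$-cells inside the fixed set) that removes these low-dimensional obstructions and makes the whole extension equivariant.
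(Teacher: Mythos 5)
Your proposal is correct and follows essentially the same route the paper intends: the paper's proof is only a pointer to \cite[Proposition 5.7]{kune}, whose method (as the proof of Proposition \ref{kune-Prop5.5'} here makes explicit) is exactly your scheme of sending the cone point to a point of the fixed set and performing a skeletonwise equivariant extension, with the fixed-set hypotheses handling equivariance along the singular strata in dimensions $1$ and $2$ and $\pi_2(\tilde X)=\pi_3(\tilde X)=0$ killing the obstructions over the $3$- and $4$-cells. Your identification of $c*\widetilde{\mathcal{D}B}$ with $D^4$ carrying the doubled linear $\Gamma$-action, and your accounting of which hypotheses remove which obstruction, are accurate.
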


\begin{proof}
The proof is similar to that of \cite[Proposition 5.7]{kune}.
\end{proof}

\begin{proposition}\label{kune-Prop5.8}\cite[Proposition 5.8]{kune}
Let $X$ be a 3-OISIBO whose particles are irreducible. Let $p:\tilde{X}\to X$ be the universal covering and $\sigma\in$ {\rm Aut}$(\tilde{X},p)$ be any  nontrivial element of finite order. Suppose that each particle of $\tilde{X}$ is non-compact. Then the following holds:
\begin{enumerate}
\item[{\bf (i)}] {\rm Fix}$(\sigma )\ne\phi$ and is homeomorphic to a tree.
\item[{\bf (ii)}] $O_1(X)$ is trivial.
\end{enumerate}
\end{proposition}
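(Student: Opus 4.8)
The plan is to prove (i) by equivariant three-manifold topology on the pieces of $\tilde X$ and then deduce (ii) formally from it together with Proposition \ref{kune-Prop5.5'}. First I record the structure of $\tilde X$: as an OISIBO it carries a dual tree $T$ whose vertices are its particles and whose edges are its identifying ballic orbifolds, and Aut$(\tilde X,p)$ acts simplicially on $T$. Since $\tilde X$ is the universal cover, each of its particles is a simply connected good orbifold, hence a three-manifold; being a cover of an irreducible particle of $X$ it is irreducible, so by the sphere theorem $\pi_2=0$, and non-compactness forces $H_3=0$, whence by Hurewicz each particle of $\tilde X$ is contractible. This is exactly where the hypotheses \emph{irreducible} and \emph{non-compact} are spent. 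Because $X$ is orientable, every element of Aut$(\tilde X,p)$, in particular $\sigma$, acts by orientation-preserving homeomorphisms.

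Next I locate a $\sigma$-invariant piece. The finite cyclic group $\langle\sigma\rangle$ acts on $T$, and a finite group acting on a tree fixes a point of its geometric realization; if $\sigma$ has no edge inversion, Proposition \ref{tree-new-Prop} already supplies a fixed vertex, and the possibility of an inverted edge is absorbed by passing to the barycenter, which corresponds to a $\sigma$-invariant gluing ball. In either case $\sigma$ preserves a particle $P$ or a ball $B$ of $\tilde X$. Restricting, $\sigma|_P$ (resp.\ $\sigma|_B$) is a finite-order orientation-preserving diffeomorphism of a contractible orientable three-manifold (resp.\ a three-ball); by Smith theory (reducing to prime order by induction on the order of $\sigma$ if necessary) its fixed set is nonempty and acyclic, and being orientation-preserving it has no codimension-one stratum, so it is a $1$-manifold, i.e.\ an arc. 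Proposition \ref{fes-Prop2.1} then shows that the vertices and edges of $T$ fixed by $\sigma$ form a subtree $T_\sigma$, and over $T_\sigma$ the local fixed arcs match along the fixed arcs of the shared gluing balls.

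To finish (i) I assemble these local arcs. Connectivity of Fix$(\sigma)$ follows from connectivity of $T_\sigma$ together with the fact that adjacent invariant pieces meet along the nonempty fixed arc of their common ball; that Fix$(\sigma)$ is a tree (a contractible $1$-complex) follows from applying Smith theory globally to the contractible space $\tilde X$, which gives that Fix$(\sigma)$ is acyclic, so the connected graph built from the arcs has no cycle. I expect this assembly step, together with the verification that the local fixed sets are genuine arcs meeting the gluing balls compatibly, to be the main obstacle; the non-compactness hypothesis is essential here, since without contractibility of the particles Smith theory would not force the fixed set to be acyclic, and one could only conclude that it is a nonempty $1$-manifold.

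Part (ii) is then immediate. Given $f:S^1\to X$ in $O_1(X)$, the class $[f]$ has finite order $n\neq 1$, so it corresponds to a finite-order element $\sigma\in$ Aut$(\tilde X,p)$; by (i), Fix$(\sigma)=$ Fix$([f]_A)\neq\emptyset$. Proposition \ref{kune-Prop5.5'} then extends $f$ to a b-continuous map from the discal $2$-orbifold $D^2(n)$, which is precisely the triviality of $f$. Since $f\in O_1(X)$ was arbitrary, $O_1(X)$ is trivial.
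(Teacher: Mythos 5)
This paper does not actually prove Proposition \ref{kune-Prop5.8}: it is imported verbatim from \cite[Proposition 5.8]{kune} and used as a black box (notably inside the proof of Proposition \ref{Oitrivial-prop}), so there is no in-paper argument to compare yours against. On its own terms, your outline follows what is surely the intended route: pass to the dual tree of the tree-of-spaces $\tilde{X}$, locate a $\sigma$-invariant particle or identifying ball, apply Smith theory there, and propagate along the fixed subtree supplied by Proposition \ref{fes-Prop2.1}. Your part (ii) is exactly the deduction the authors themselves carry out in Proposition \ref{Oitrivial-prop} (nonempty fixed set plus Proposition \ref{kune-Prop5.5'}), so that part is fine as stated.

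A few steps are asserted where real work is needed. (1) That the particles of $\tilde{X}$ are simply connected manifolds requires that each $\pi_1(X_i)$ inject into $\pi_1(X)$; this holds because the ballic edge orbifolds have finite fundamental groups injecting into the good vertex orbifolds, so the OISIBO is an honest graph of groups, but it should be said rather than assumed. (2) Irreducibility of a noncompact simply connected cover of an irreducible $3$-orbifold is the equivariant sphere theorem, not the ordinary one, and concluding that $\mathrm{Fix}(\sigma)$ meets each invariant piece in a $1$-manifold requires local linearity (or smoothness) of the deck action; both are available here but are genuine inputs. (3) Your treatment of composite order and of the final acyclicity is only gestured at; the clean statement is that a connected mod-$p$ acyclic $1$-manifold is already contractible, which drives the induction over a prime divisor of the order. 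Finally, in the edge-inversion case you should observe that an inverting $\sigma$ can fix no vertex and can invert only one edge (otherwise Proposition \ref{fes-Prop2.1} applied to the barycentric subdivision forces a fixed endpoint of the inverted edge), so that $\mathrm{Fix}(\sigma)$ is then a single arc inside that ball and connectivity is not threatened. None of these is a fatal flaw, but they are precisely the places where the cited proof in \cite{kune} must do work, and your write-up should not present them as automatic.
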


Since we assume that any orbifold is orientable, the restriction of $\sigma$ to each particle is orientation preserving, and each identifying ballic orbifold is orientable.

\begin{proposition}\label{kune-Prop5.9}\cite[Proposition 5.9]{kune}
Let $X$ be a 3-OISIBO, each particle of which is irreducible, and $p:\tilde{X}\to X$ the universal covering. Let $G$ be any subgroup of {\rm Aut}$(\tilde{X},p)$, which is isomorphic to the orbifold fundamental group of a spherical 2-orbifold $S$. Suppose that each particle of $\tilde{X}$ is non-compact. Then the following holds:
\begin{enumerate}
\item[{\bf (i)}] {\rm Fix}$(G)$ is either a point or a tree.
\item[{\bf (ii)}] $\pi_2(\tilde{X})=\pi_3(\tilde{X})=0$.
\item[{\bf (iii)}] $O_i(X)$'s are trivial, $i=1,2,3$.
\end{enumerate}
\end{proposition}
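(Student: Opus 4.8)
The plan is to treat the three conclusions in turn, exploiting that $G\cong\pi_1^{\mathrm{orb}}(S)$ is \emph{finite} and admits a presentation of the type $G_n$ recorded before Lemma~\ref{fes-Lem2.2}: it is generated by finite-order rotations $a_1,\dots,a_n$ whose pairwise products $a_ia_j$ again have finite order. The whole argument rests on Proposition~\ref{kune-Prop5.8}, which already supplies, for every nontrivial finite-order $\sigma$, a nonempty fixed tree $\mathrm{Fix}(\sigma)\subset\tilde X$ and the triviality of $O_1(X)$, together with the dual-tree description of $\tilde X$ (particles as vertices, identifying ballic orbifolds as edges) on which $\mathrm{Aut}(\tilde X,p)$ acts simplicially.

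For (i) I would write $\mathrm{Fix}(G)=\bigcap_{i}\mathrm{Fix}(a_i)$ and argue that this intersection is nonempty and connected. Nonemptiness I would obtain from a Helly-type argument: each $\mathrm{Fix}(a_i)$ is a tree by Proposition~\ref{kune-Prop5.8}, and for each pair $i,j$ the product $a_ia_j$ has finite order, so $\mathrm{Fix}(a_ia_j)\neq\emptyset$; the standard dichotomy that a product of two elliptics with disjoint fixed sets is translation-like then forces $\mathrm{Fix}(a_i)\cap\mathrm{Fix}(a_j)\neq\emptyset$, and the Helly property for subtrees upgrades pairwise intersection to a common fixed point. Connectedness follows since each $\mathrm{Fix}(a_i)$ is convex in the tree-like geometry of $\tilde X$ and an intersection of convex sets is convex, so $\mathrm{Fix}(G)$ is a single point or a tree. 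Alternatively one may first apply Lemma~\ref{fes-Lem2.2} to the $G$-action on the dual tree to confine $G$ to a single invariant particle and run the fixed-point analysis there, with Propositions~\ref{tree-new-Prop} and \ref{fes-Prop2.1} as the relevant inputs. I expect this to be the main obstacle, precisely because these tree arguments must be justified for the fixed sets sitting inside the $3$-dimensional $\tilde X$ rather than inside a genuine simplicial tree.

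For (ii) I would show that $\tilde X$ is in fact contractible. Each particle of $\tilde X$ is a simply-connected, irreducible, non-compact $3$-manifold; by the sphere theorem its $\pi_2$ vanishes, and being then acyclic and simply connected it is contractible. Since $\tilde X$ is assembled from these contractible particles by identifying balls, which are themselves contractible, a Mayer--Vietoris/van Kampen induction along the (contractible) dual tree shows that $\tilde X$ is contractible; in particular $\pi_2(\tilde X)=\pi_3(\tilde X)=0$.

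Finally, for (iii), triviality of $O_1(X)$ is immediate from Proposition~\ref{kune-Prop5.8}(ii). For $O_2(X)$ I would take any $f:S\to X$ and apply Proposition~\ref{kune-Prop5.6'}: its image $f_*\pi_1(S)$ is a finite quotient of a spherical orbifold group, hence still generated by finite-order elements with finite-order pairwise products, so the analysis of (i) applies and yields $\mathrm{Fix}(f_*\pi_1(S))_A\neq\emptyset$ together with the required fixed interval $\ell_\sigma$ for each $\sigma$, while $\pi_2(\tilde X)=0$ comes from (ii); this extends $f$ over the cone, so $O_2(X)$ is trivial. For $O_3(X)$ the same scheme works through Proposition~\ref{kune-Prop5.7'}, now using both $\pi_2(\tilde X)=\pi_3(\tilde X)=0$ from (ii) and the fixed-point and interval data from (i) applied to $f_*\pi_1(\partial B)$. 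The delicate points here are verifying the connectivity and simple-connectivity of the relevant fixed sets and producing the intervals $\ell_\sigma$, for which the convexity established in (i) is exactly what is needed.
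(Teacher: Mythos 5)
The paper does not actually prove this proposition: it is imported verbatim from \cite[Proposition 5.9]{kune}, so there is no internal argument to compare yours against. Judged on its own terms, your treatment of (ii) and (iii) is sound in outline and consistent with how the paper argues the analogous Proposition \ref{Oitrivial-prop}: each particle of $\tilde X$ is a simply connected, irreducible, non-compact $3$-manifold, hence contractible (sphere theorem, Hurewicz, Whitehead), and a Mayer--Vietoris induction along the dual tree kills $\pi_2$ and $\pi_3$ of $\tilde X$; Propositions \ref{kune-Prop5.5'}, \ref{kune-Prop5.6'} and \ref{kune-Prop5.7'} then convert the fixed-point data into triviality of the $O_i(X)$. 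One point you gloss over in (iii): $f_*\pi_1(S)$ may be a proper quotient of $\pi_1(S)$, so you need that every such quotient is again (trivial or) a spherical $2$-orbifold group before the fixed-point statement applies to it; this is true but must be said.

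The genuine gap is in (i), and you have correctly located it yourself. The Helly property for subtrees, the convexity of fixed sets, and the dichotomy ``a product of two elliptics with disjoint fixed sets is translation-like'' are statements about isometric actions on a simplicial or $\mathbb{R}$-tree. Here $a_i$ and $a_j$ act on the $3$-dimensional space $\tilde X$; the fact that each $\mathrm{Fix}(a_i)$ happens to be homeomorphic to a tree does not make $\tilde X$ a tree, does not endow it with any geodesic or convexity structure, and does not make the elliptic-product dichotomy available. What is actually needed is genuinely $3$-dimensional input: Smith-theoretic control of the fixed sets of finite cyclic orientation-preserving actions on simply connected irreducible $3$-manifolds, and an analysis of how the fixed axes of the generators of a spherical group must meet (as in the local model of a finite subgroup of $SO(3)$ acting on $\mathbb{R}^3$, where all rotation axes pass through the origin). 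Your fallback --- first use Lemma \ref{fes-Lem2.2} to confine $G$ to a single invariant particle and analyse the action there --- is the right first step, but the analysis inside that particle still cannot be carried out with Propositions \ref{tree-new-Prop} and \ref{fes-Prop2.1}, which again concern simplicial trees. As written, (i) is asserted rather than proved, and since (iii) feeds on (i), the gap propagates.
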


\begin{proposition}\label{ofd-eq-prop}
Let $X=(\tilde{X},p,|X|)$, $Y=(\tilde{Y},q,|Y|)$ be orbifold compositions, and $f=(|f|,\tilde{f}):X\rightarrow Y$ a b-continuous map. Then for $[\alpha]\in\pi_1(X,x)$,
\begin{equation}
\tilde{f}\circ[\alpha]_A=(f_*([\alpha]))_A\circ\tilde{f}.
\end{equation}
\end{proposition}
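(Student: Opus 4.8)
The plan is to read off the equivariance directly from the way the induced homomorphism $f_*$ is built on loops, together with condition (ii) of Definition \ref{compositioncontmap-def2}, pinning down the deck transformation that (ii) produces by computing its value at a single regular base point.

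First I would fix base points. Take $x\in|X|-\Sigma X$ as in the statement and set $y=|f|(x)$; b-continuity of $f$ guarantees $y\in|Y|-\Sigma Y$, so $y$ may serve as a base point of $\pi_1(Y)$. Choose a lift $\tilde{x}_0\in p^{-1}(x)$ and put $\tilde{y}_0=\tilde{f}(\tilde{x}_0)$; by property (i) of a continuous map, $q(\tilde{y}_0)=|f|(p(\tilde{x}_0))=y$, so $\tilde{y}_0$ is a lift of $y$. Under the isomorphism $\pi_1(X,x)\cong\mathrm{Aut}(\tilde{X},p)$, recall that $[\alpha]_A$ is characterized at the base point by $[\alpha]_A(\tilde{x}_0)=\tilde{\alpha}(1)$, where $\tilde{\alpha}$ is the lift of a representative loop $\alpha$ starting at $\tilde{x}_0$; the same description holds in $Y$ for $(f_*[\alpha])_A$ relative to $\tilde{y}_0$.

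The key computation is to lift the image loop. Let $\alpha$ represent $[\alpha]$ and consider $\beta=|f|\circ\alpha$, a loop at $y$ representing $f_*([\alpha])$. By property (i), the path $\tilde{f}\circ\tilde{\alpha}$ satisfies $q\circ(\tilde{f}\circ\tilde{\alpha})=|f|\circ p\circ\tilde{\alpha}=\beta$ and starts at $\tilde{f}(\tilde{x}_0)=\tilde{y}_0$; by the unique path-lifting property noted after Definition \ref{composition-def} (which holds because each $(f_j^{\varepsilon})_*$ is monic), $\tilde{f}\circ\tilde{\alpha}$ is therefore \emph{the} lift of $\beta$ from $\tilde{y}_0$. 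Comparing endpoints gives $\tilde{f}([\alpha]_A(\tilde{x}_0))=\tilde{f}(\tilde{\alpha}(1))=(f_*[\alpha])_A(\tilde{y}_0)=(f_*[\alpha])_A(\tilde{f}(\tilde{x}_0))$, i.e. the two maps $\tilde{f}\circ[\alpha]_A$ and $(f_*[\alpha])_A\circ\tilde{f}$ agree at the point $\tilde{x}_0$.

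Finally I would upgrade this pointwise agreement to equality of maps on all of $\tilde{X}$. Property (ii) of Definition \ref{compositioncontmap-def2} furnishes some $\tau\in\mathrm{Aut}(\tilde{Y},q)$ with $\tilde{f}\circ[\alpha]_A=\tau\circ\tilde{f}$; evaluating at $\tilde{x}_0$ and invoking the previous step yields $\tau(\tilde{y}_0)=(f_*[\alpha])_A(\tilde{y}_0)$. The main obstacle, and the only place that needs care, is the rigidity statement that a deck transformation of the universal covering is determined by its value at one point lying over $|Y|-\Sigma Y$: one must know that $\tau^{-1}\circ(f_*[\alpha])_A$ fixes the regular point $\tilde{y}_0$ and that the only deck transformation fixing a regular point is the identity. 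Granting this orbifold-composition analog of classical covering theory (of the kind already asserted in Section \ref{orbifoldcomposition-sec}), it follows that $\tau=(f_*[\alpha])_A$, whence $\tilde{f}\circ[\alpha]_A=(f_*[\alpha])_A\circ\tilde{f}$, as required.
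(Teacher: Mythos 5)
Your proposal is correct and takes essentially the same route as the paper's own proof: both characterize $[\alpha]_A$ and $(f_*[\alpha])_A$ by where they send the chosen lift of the base point, obtain $\tau$ from condition (ii), evaluate at $\tilde{x}_0$, and conclude $\tau=(f_*[\alpha])_A$ by uniqueness of the deck transformation with that value. You merely make explicit two steps the paper leaves implicit (that $\tilde{f}\circ\tilde{\alpha}$ is the unique lift of $|f|\circ\alpha$, and the rigidity of deck transformations at a regular point), which is a reasonable elaboration rather than a different argument.
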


\begin{proof}
Let $\tilde{x}\in p^{-1}(x)$ be the base point of $\tilde{X}$. Note that $[\alpha]_A$ is characterized as the element of Aut$(\tilde{X},p)$ which transforms $\tilde{\alpha}(0)=\tilde{x}$ to $\tilde{\alpha}(1)$, and $(f_*([\alpha]))_A=[f\circ\alpha]_A$ is characterized as the element of Aut$(\tilde{Y},q)$ which transforms $\tilde{f}(\tilde{\alpha}(0))$ to $\tilde{f}(\tilde{\alpha}(1))$. By the definition of b-continuous map, there exists an element $\tau\in$ Aut$(\tilde{Y},q)$ such that $\tilde{f}\circ[\alpha]_A=\tau\circ\tilde{f}$. On the other hand, $\tau(\tilde{f}(\tilde{\alpha}(0)))=(\tau\circ\tilde{f})(\tilde{\alpha}(0))=(\tilde{f}\circ[\alpha]_A)(\tilde{\alpha}(0))=\tilde{f}(\tilde{\alpha}(1))$. Hence $\tau=(f_*([\alpha]))_A$.
\end{proof}

\begin{proposition}\label{Oitrivial-prop}
Let $X=(X_i,Y_j\times[0,1],f^j_{\varepsilon})_{i\in I,j\in J,\varepsilon=0,1}$ be an orbifold composition, where each particle of each $X_i$ and $Y_j$ is an orientable and irreducible 3-orbifold whose universal covering is noncompact. Then $O_i(X)$'s are trivial, $i=1,2,3$.
\end{proposition}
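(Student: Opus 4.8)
The plan is to deduce the triviality of each $O_i(X)$ from the extension criteria already in hand: Proposition~\ref{kune-Prop5.5'} for $O_1$, Proposition~\ref{kune-Prop5.6'} for $O_2$, and Proposition~\ref{kune-Prop5.7'} for $O_3$. All three ask for two kinds of input: first, that finite subgroups of $G:=\pi_1(X)=\mathrm{Aut}(\tilde X,p)$ have nonempty, suitably connected fixed-point sets carrying the required invariant intervals; and second, that $\pi_2(\tilde X)=\pi_3(\tilde X)=0$. To organize both, I would first build the Bass--Serre tree $T$ of the composition, on which $G$ acts simplicially with vertex stabilizers the images of the $\pi_1(X_i)$ and edge stabilizers the images of the $\pi_1(Y_j)$; the hypothesis that each attaching map $f_j^{\varepsilon}$ is monic guarantees these are injective, so $T$ and the action are genuinely those of Bass--Serre theory. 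The preimage in $\tilde X$ of a component $X_i$ is then a disjoint union of copies of its OISIBO universal cover $\tilde X_i$, one over each vertex of $T$ lying over $i$, with the stabilizer of such a vertex acting on the corresponding copy exactly as $\mathrm{Aut}(\tilde X_i,p_i)$.

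For the homotopy groups I would argue that $\tilde X$ is contractible. Each particle is an irreducible $3$-orbifold with noncompact universal covering; being a simply connected, noncompact, irreducible $3$-manifold, that covering is contractible by the sphere theorem. Within a single component the particle covers are glued along contractible ballic covers in a tree pattern, so each $\tilde X_i$ and each $\tilde Y_j$ is contractible; alternatively one invokes Proposition~\ref{kune-Prop5.9}(ii), which already yields $\pi_2(\tilde X_i)=\pi_3(\tilde X_i)=0$. Assembling these over $T$, the space $\tilde X$ is a tree of contractible spaces glued along contractible edge regions $\tilde Y_j\times[0,1]$, hence contractible, and in particular $\pi_2(\tilde X)=\pi_3(\tilde X)=0$.

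It remains to supply the fixed-point data. For $O_1$, take $f\in O_1(X)$ and put $\sigma=[f]_A$, of finite order. After barycentrically subdividing $T$ to remove edge inversions, $\sigma$ fixes a vertex by Proposition~\ref{tree-new-Prop}, hence lies in a vertex stabilizer and restricts to a finite-order element of $\mathrm{Aut}(\tilde X_i,p_i)$; Proposition~\ref{kune-Prop5.8} then gives $\mathrm{Fix}(\sigma)\ne\emptyset$, and Proposition~\ref{kune-Prop5.5'} makes $f$ trivial. For $O_2$ and $O_3$ the relevant groups are $f_*\pi_1(S)$ and $f_*\pi_1(\partial B)$, which are finite and, as spherical $2$-orbifold groups acting on $T$ without edge inversions, fix a vertex by Lemma~\ref{fes-Lem2.2}; they therefore land in a single vertex stabilizer, and Proposition~\ref{kune-Prop5.9}(i) supplies the connected (point-or-tree) fixed sets, while Proposition~\ref{fes-Prop2.1} furnishes the intervals fixed by each $\sigma_A$ demanded in Propositions~\ref{kune-Prop5.6'} and~\ref{kune-Prop5.7'}. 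Combined with $\pi_2(\tilde X)=\pi_3(\tilde X)=0$, those two propositions yield the triviality of $O_2(X)$ and $O_3(X)$.

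The main obstacle I expect is justifying this tree-of-spaces picture of $\tilde X$ when the attaching maps carry genuine intersections and self-intersections, so that the gluing loci need not be embedded. The clean Bass--Serre description above must then be replaced by, or justified through, a homotopy-colimit (mapping-cylinder) model built from the product regions $Y_j\times[0,1]$: one has to check that this model computes the same fundamental group and the same universal cover up to the homotopy that matters, that the vertex pieces of $\tilde X$ are still copies of $\tilde X_i$ on which the vertex stabilizer acts as the full deck group, and that contractibility survives the assembly. Verifying this compatibility --- rather than any single homotopy or fixed-point computation --- is where the real work lies.
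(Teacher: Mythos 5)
Your proposal follows essentially the same route as the paper's proof: act on the associated tree of the composition, use Lemma \ref{fes-Lem2.2} and Proposition \ref{fes-Prop2.1} to trap the finite (spherical) subgroups in invariant vertex OISIBO's, invoke Propositions \ref{kune-Prop5.8} and \ref{kune-Prop5.9} for the nonempty connected fixed sets, establish $\pi_2(\tilde X)=\pi_3(\tilde X)=0$ by assembling the vertex pieces over the tree (the paper does this by a Mayer--Vietoris induction rather than asserting contractibility, but the substance is the same), and conclude via Propositions \ref{kune-Prop5.5'}, \ref{kune-Prop5.6'} and \ref{kune-Prop5.7'}. The mapping-cylinder caveat you raise at the end is real but is exactly what the paper's appeal to the construction of $\tilde X$ in \cite{fes} is meant to cover, so no substantive divergence remains.
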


\begin{proof}
Let $p:\tilde{X}\to X$ be the universal covering. From the uniqueness of the universal covering, we may assume that $\tilde{X}$ is the orbifold composition constructed in a similar method described in  \cite{fes}.

{\it Claim}:\quad Let $G$ be any subgroup of Aut$(\tilde{X},p)$, which is isomorphic to the fundamental group of a spherical 2-orbifold. Consider the associated 1-complex $\mathcal{C}(\tilde{X})$ of $\tilde{X}$. Then, there exists a vertex OISIBO $\tilde{Z}$ of $\tilde{X}$ with respect to $\mathcal{C}(\tilde{X})$ such that $G(\tilde{Z})=\tilde{Z}$.

Indeed, $G$ is finite and acts on the tree $\mathcal{C}(\tilde{X})$ without edge inversions. By Lemma \ref{fes-Lem2.2} we have the claim.

Take any element $f\in O_1(X)$. By the claim, there exists an OISIBO $\tilde{Z}$ of $\tilde{X}$ such that $[f]_A(\tilde{Z})=\tilde{Z}$. Then by Proposition \ref{kune-Prop5.8}, Fix$([f]_A)\neq\emptyset$ in $\tilde{Z}$. Thus Fix$([f]_A)\neq\emptyset$ in $\tilde{X}$. By using of Proposition \ref{kune-Prop5.5'}, $f$ is extendable to a b-continuous map from $D^2(n)$ to $X$.

Take any element $f\in O_2(X)$, $f:S\to X$. Let $q:\tilde{S}\to S$ be the universal covering and $\tilde{f}:\tilde{S}\to\tilde{X}$ the structure map of $f$. Let $B=c*S$ be the cone on $S$ and $c$ the cone point of $B$. Let $\bar{q}:\tilde{B}=\tilde{c}*\tilde{S}\to B$ be the universal covering, $\tilde{c}=\bar{q}^{-1}(c)$ and $\bar{q}(t\tilde{x}+(1-t)\tilde{c})=tq(\tilde{x})+(1-t)c$, $\tilde{x}\in\tilde{S}$. Note that $f_*\pi_1(S)$ is isomorphic to a spherical 2-orbifold group. By the claim, there exists a vertex OISIBO $\tilde{Z}$ of $\tilde{X}$ such that $(f_*\pi_1(S))(\tilde{Z})=\tilde{Z}$. By Proposition \ref{kune-Prop5.9}, Fix$(f_*\pi_1(S))_A\neq\emptyset$. Thus there exists a point $\tilde{d}\in\tilde{Z}$ such that $(f_*\pi_1(S))_A\tilde{d}=\tilde{d}$.

Choose any $\tau\in\pi_1(S)$. We put $\sigma=f_*(\tau)$. Since $\sigma_A\in(f_*\pi_1(S))_A$, $\sigma_A(\tilde{d})=\tilde{d}$. Moreover, by the fact $\tau_A(\tilde{x}_{\tau})=\tilde{x}_{\tau}$ and Proposition \ref{ofd-eq-prop}, $\sigma_A(\tilde{f}(\tilde{x}_{\tau}))=f_*(\tau)_A(\tilde{f}(\tilde{x}_{\tau}))=\tilde{f}\circ\tau_A(\tilde{x}_{\tau})=\tilde{f}(\tilde{x}_{\tau})$.  

Let $\tilde{Z}_1$ be the OISIBO in which $\tilde{f}(\tilde{x}_{\tau})$ is included and let $\tilde{Z}_k$ be the OISIBO in which $\tilde{d}$ is included. Since $\sigma_A$ acts on the tree ${\mathcal C}(\tilde{X})$, $\tilde{Z}_1$ and $\tilde{Z}_k$ are invariant by $\sigma_A$. In addition, since $\sigma_A$ is of finite order, we can apply Proposition \ref{fes-Prop2.1} and get that any vertex OISIBO $\tilde{Z}_i$ and any edge OISIBO $\tilde{Z}_j$ between $\tilde{Z}_1$ and $\tilde{Z}_k$ are invariant by $\sigma_A$. By Proposition \ref{kune-Prop5.8}, for each $\tilde{Z}_i$ and each $\tilde{Z}_j$, Fix $\sigma_A|\tilde{Z}_i$ is a tree and Fix ($\sigma_A|\tilde{Z}_j$) is $(\mbox{a tree} \times[0,1])$ .

Note that since the structure map of each attaching map from $\tilde{Z}_{i_0}$ to $\tilde{Z}_{i_1}$ and the restriction of $\sigma_A$ to $\tilde{Z}_{i_0}$ and $\tilde{Z}_{i_1}$ commute, any  point of Fix $(\sigma_A|\tilde{Z}_{i_0})$ is mapped to a point of Fix $(\sigma_A|\tilde{Z}_{i_1})$. Hence Fix $\sigma_A$ in $\tilde{X}$ is connected. Thus we can find an interval which is fixed by $\sigma_A$, and connecting $\tilde{f}(\tilde{x}_{\tau})$ and $\tilde{d}$. 

We will show that $\pi_2(\tilde{X})=0$. Take any continuous map $\varphi:S^2\rightarrow\tilde{X}$. Since $\varphi(S^2)$ is compact, there exists a connected and compact subset $P$ of $\tilde{X}$ which contains a finite number of OISIBO's $\tilde{Z}_i$ of $\tilde{X}$ such that $\varphi(S^2)\subset P$. By Proposition \ref{kune-Prop5.9}(ii), for each  $\tilde{Z}_i$, $\pi_2(\tilde{Z}_i)=H_2(\tilde{Z}_i)=0$.  Dividing $P=P_1\cup P_2$ such that $P_1$ consists of $k$ vertex OISIBO's, $(k-1)$ edge OISIBO's, and $\tilde{Y}\times[0,\frac{1}{\; 2\;}+\varepsilon]$ and $P_2$ consists of a vertex OISIBO and $\tilde{Y}\times[\frac{1}{\; 2\;}-\varepsilon,1]$, $P_1\cap P_2=\tilde{Y}\times[\frac{1}{\; 2\;}-\varepsilon,\frac{1}{\; 2\;}+\varepsilon]$ where $\tilde{Y}\times[0,1]$ is an edge OISIBO of $\tilde{X}$, we can show $H_2(P)=0$ by the induction on $k$ and Mayer-Vietoris exact sequences. Thus $\pi_2(P)=0$, which gives the fact $[\varphi]=0$ in $\pi_2(P)$, so is in $\pi_2(\tilde{X})$. Then the triviality of $f$ follows from Proposition \ref{kune-Prop5.6'}.

The triviality of $O_3(X)$ is derived from showing $\pi_3(\tilde{X})=0$ and Proposition \ref{kune-Prop5.7'} in the similar mannerr.
\end{proof}

Let $M$ be a 3-orbifold, and $X$ an orbifold composition. We say that two b-continuous maps $f,g:M\rightarrow X$ are {\it C-equivalent\/} if there is a sequence of b-continious maps $f=f_0,f_1,\dots,f_n=g$ from $M$ to $X$ with either $f_i$ is b-homotopic to $f_{i-1}$ or $f_i$ agrees with $f_{i-1}$ on $M-B$ for a certain ballic 3-orbifold $B\subset M$ with $B\cap\partial M$ a discal orbifold or $|B|\cap|\partial M|=\emptyset$. Note that C-equivalent b-continuous maps induce the same homomorphisms $\pi_1(M)\rightarrow\pi_1(X)$ up to choices of base points and inner automorphisms.

\begin{remark}\label{c-eq-rem}
Let $f$, $g$ be C-equivalent maps from a 3-orbifold $M$ to an orbifold composition $X$. If $O_i(X)$'s are trivial, $i=2,3$, then $f$ and $g$ are b-homotopic.
\end{remark}

\begin{lemma}\label{Finita-lem5.4'}
Let $M$ be an orbifold, and $X$ an orbifold-composition. Let $p:\tilde{M}\to M$ and $q:\tilde{X}\to X$ be the universal coverings. Suppose dim $\tilde{M}=n$ and $\pi_{n-1}(|\tilde{X}|)=0$. Let $\tilde{g}:|\tilde{M}|\to|\tilde{X}|$ be a continuous map which satisfies the condition that there exists a homomorphism $\varphi:\mbox{Aut}(\tilde{M},p)\to\;\mbox{Aut}(\tilde{X},q)$ such that for each $\sigma\in\;\mbox{Aut}(\tilde{M},p)$, $\tilde{g}\circ\sigma=\varphi(\sigma)\circ\tilde{g}$. Then there exists a continuous map $\tilde{f}:|\tilde{M}|\to|\tilde{X}|$ which satisfies the following:
\begin{enumerate}
\item[(1)] There exists a point $\tilde{x}\in|\tilde{M}|-p^{-1}(\Sigma M)$ such that $\tilde{f}(\tilde{x})\in|\tilde{X}|-q^{-1}(\Sigma X)$.
\item[(2)] There exists an $n$-ball $B^n\subset |\tilde{M}|-p^{-1}(\Sigma M)$ such that $B^n\cap\sigma(B^n)=\emptyset$ for each $\sigma\in\;\mbox{Aut}(\tilde{M},p)$, $\sigma\neq\;\mbox{id}$, and
\begin{equation}
\tilde{f}|
\left(
\begin{array}{l}
\displaystyle|\tilde{M}|-\bigcup_{\sigma\in\;\mbox{Aut}(\tilde{M},p)}\sigma(B^n)
\end{array}\right)
=\tilde{g}|
\left(
\begin{array}{l}
\displaystyle|\tilde{M}|-\bigcup_{\sigma\in\;\mbox{Aut}(\tilde{M},p)}\sigma(B^n)
\end{array}\right)
.
\end{equation}
\item[(3)] For each $\sigma\in\mbox{Aut}(\tilde{M},p)$, $\tilde{f}\circ\sigma=\varphi(\sigma)\circ\tilde{f}$.
\end{enumerate}
\end{lemma}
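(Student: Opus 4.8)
The plan is to leave $\tilde{g}$ unchanged except inside a single free orbit of small balls, and on that orbit to replace it by an equivariant ``cone-off'' that pushes the center of the base ball to a regular point of $|\tilde{X}|$. The only topological input required is the vanishing of $\pi_{n-1}(|\tilde{X}|)$, which is exactly what lets us cap the boundary sphere of the ball; the rest is the bookkeeping of an equivariant modification.

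First I would choose the ball. Since $M$ is good, $\tilde{M}$ is a manifold, and over the regular locus $|\tilde{M}|-p^{-1}(\Sigma M)$ the map $p$ is an honest covering, so $\mathrm{Aut}(\tilde{M},p)$ acts there freely and properly discontinuously. Hence I may pick a point $\tilde{x}\in|\tilde{M}|-p^{-1}(\Sigma M)$ and an $n$-ball $B^n$ centered at $\tilde{x}$, contained in the regular locus and small enough that the translates $\{\sigma(B^n)\}_{\sigma\in\mathrm{Aut}(\tilde{M},p)}$ are pairwise disjoint; this secures the disjointness clause of (2). I would also fix once and for all a regular point $\tilde{y}_0\in|\tilde{X}|-q^{-1}(\Sigma X)$ of the target.

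Next I would redefine the map on $B^n$. Regard $B^n$ as the cone on $\partial B^n\cong S^{n-1}$, written as $(S^{n-1}\times[0,1])/(S^{n-1}\times\{1\})$, where $(u,0)$ is the boundary point $u$ and $S^{n-1}\times\{1\}$ is collapsed to the center $\tilde{x}$. Because $\pi_{n-1}(|\tilde{X}|)=0$ and $|\tilde{X}|$ is path-connected, the restriction $\tilde{g}|\partial B^n:S^{n-1}\to|\tilde{X}|$ is null-homotopic, and after sliding the terminal constant value along a path to the regular point I may choose a null-homotopy $H:\partial B^n\times[0,1]\to|\tilde{X}|$ with $H(\cdot,0)=\tilde{g}|\partial B^n$ and $H(\cdot,1)\equiv\tilde{y}_0$. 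Using $H$ as the cone coordinate defines $\tilde{f}_0:B^n\to|\tilde{X}|$ with $\tilde{f}_0|\partial B^n=\tilde{g}|\partial B^n$ and $\tilde{f}_0(\tilde{x})=\tilde{y}_0$. I would then propagate this equivariantly, setting $\tilde{f}|_{\sigma(B^n)}=\varphi(\sigma)\circ\tilde{f}_0\circ\sigma^{-1}$ for each $\sigma$, and $\tilde{f}=\tilde{g}$ on the complement of $\bigcup_{\sigma}\sigma(B^n)$.

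It then remains to verify that $\tilde{f}$ is well defined, continuous, and satisfies (1)--(3). Well-definedness is immediate from the disjointness of the orbit of $B^n$. Continuity reduces to matching across each $\partial(\sigma B^n)$: for $z\in\partial(\sigma B^n)$ one has $\tilde{f}|_{\sigma(B^n)}(z)=\varphi(\sigma)\tilde{f}_0(\sigma^{-1}z)=\varphi(\sigma)\tilde{g}(\sigma^{-1}z)=\tilde{g}(z)$ by the equivariance of $\tilde{g}$, which is precisely the value of $\tilde{f}$ just outside the ball. Property (1) holds with the center $\tilde{x}$, since $\tilde{f}(\tilde{x})=\tilde{y}_0$ is regular; property (2) holds by construction; and for (3), if $z\in\tau(B^n)$ then $\tilde{f}(\sigma z)=\varphi(\sigma\tau)\tilde{f}_0((\sigma\tau)^{-1}\sigma z)=\varphi(\sigma)\varphi(\tau)\tilde{f}_0(\tau^{-1}z)=\varphi(\sigma)\tilde{f}(z)$, with the analogous (easier) identity off the orbit. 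The only genuinely content-bearing step is the capping of $\partial B^n$ together with the requirement that the null-homotopy terminate at a \emph{regular} value $\tilde{y}_0$: the former is exactly where $\pi_{n-1}(|\tilde{X}|)=0$ is used, and the latter, made possible by path-connectedness of $|\tilde{X}|$ and density of the regular locus, is what upgrades a bare extension into the b-continuity condition (1). The equivariant propagation and the boundary gluing are then formal.
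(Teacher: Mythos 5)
Your construction is correct and complete: the choice of a ball with pairwise disjoint translates in the free part of the deck action, the cone-off of $\tilde{g}|\partial B^n$ to a regular point using $\pi_{n-1}(|\tilde{X}|)=0$, and the equivariant propagation $\varphi(\sigma)\circ\tilde{f}_0\circ\sigma^{-1}$ together with the boundary-matching and equivariance checks give exactly (1)--(3). The paper itself only defers to the proof of \cite[Lemma 5.4]{finite}, and your argument is essentially that same standard approach, so there is nothing to add.
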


\begin{proof}
The proof is similar to that of \cite[Lemma 5.4]{finite}.
\end{proof}

\begin{theorem}[Transversality Theorem]\label{trans}
Let $M$ be a good, compact, connected, orientable 3-orbifold, and $X$ a 3-orbifold composition. Suppose that there exists an edge OISIBO $\; Y\times[0,1]$ of $X$, the core  $Y$ satisfies that $O_2(X-Y)$ and $O_2(Y)$ are trivial. Then, for any b-continuous map $f:M\to X$, there exists a b-continuous map $g=(|g|,\tilde{g}):M\to X$ which satisfies the following :
\begin{enumerate}
\item[{\bf (i)}] $g$ is C-equivalent to $f$.
\item[{\bf (ii)}] Each component of $g^{-1}(Y)$ is a compact, properly embedded, 2-sided, incompressible 2-suborbifold in $M$.
\item[{\bf (iii)}] For properly chosen product neighborhoods $Y\times [-1,1]$ of $Y=Y\times 0$ in $X$, and $g^{-1}(Y)\times [-1,1]$ of $g^{-1}(Y)=g^{-1}(Y)\times 0$ in $M$, $|g|$ maps each fiber $x\times |[-1,1]|$ homeomorphically to the fiber $|g|(x)\times |[-1,1]|$ for each $x\in |g^{-1}(Y)|$.
\end{enumerate}
\end{theorem}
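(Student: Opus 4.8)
The plan is to transplant the classical Culler--Shalen transversality argument into the present orbifold-composition setting, carried out equivariantly on universal coverings. First I would put $f$ into general position with respect to the product neighborhood $Y\times[-1,1]$ of $Y=Y\times 0$ in $X$. Lifting to the universal covering $q:\tilde X\to X$ and invoking Lemma \ref{Finita-lem5.4'} to realize the necessary perturbation equivariantly (with respect to a homomorphism $\varphi$ covering $f_*$), I would produce a b-continuous map $g=(|g|,\tilde g)$ for which $|g|$ is transverse to $|Y|$ in a manner respecting the singular set, so that $g^{-1}(Y)$ is a compact, properly embedded, $2$-sided $2$-suborbifold of $M$ and condition (iii) holds on suitably chosen product neighborhoods. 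Here $2$-sidedness is inherited from that of the edge $Y$ in $X$, and properness from transversality along $\partial M$. Since this initial perturbation is a b-homotopy, it is a C-equivalence, establishing (i).

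It then remains to modify $g$, staying inside its C-equivalence class, until every component of $g^{-1}(Y)$ is incompressible. I would measure the complexity of $g^{-1}(Y)$ by a pair (number of components, together with a weight reflecting their orbifold Euler characteristics) and run reductions that strictly decrease this complexity, so that the process terminates. The first reduction removes trivial spherical components: if a component $S$ of $g^{-1}(Y)$ is a spherical $2$-orbifold, then $g|_S$ represents an element of $O_2(Y)$ or, after pushing to the opposite side, of $O_2(X-Y)$, which is trivial by hypothesis; hence $g|_S$ extends to a b-continuous map from the cone $c*S$, i.e. from a ballic $3$-orbifold. Using this cone extension I would homotope $g$ across a ballic $3$-orbifold $B\subset M$ with $B\cap\partial M$ discal or $|B|\cap|\partial M|=\emptyset$ --- precisely a C-equivalence move --- thereby deleting $S$ from the preimage and lowering the complexity.

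The second reduction enforces incompressibility. Suppose a component $S$ is compressible, so there is a compressing discal $2$-orbifold $D\subset M$ with $D\cap g^{-1}(Y)=\partial D$ essential on $S$. Since $\partial D$ bounds $D$ in $M$, the class $g_*[\partial D]$ lies in the image of the edge group under its attaching map; as that map is $\pi_1$-injective (its induced map is monic), $g|_{\partial D}$ is, up to the finite order coming from a possible cone point of $D$, a class carried by $\pi_1(Y)$, and the fixed-point hypothesis of Proposition \ref{kune-Prop5.5'} is supplied. By that proposition $g|_{\partial D}$ extends to a b-continuous map from a discal $2$-orbifold into $Y$ of index matching that of $D$. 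I would use this extension to compress: surger $S$ along $D$ and redefine $g$ inside a ballic $3$-orbifold neighborhood $B$ of $D$ so as to agree with the extension, once more a C-equivalence move, strictly reducing the complexity. Iterating both reductions until neither applies yields the required $g$.

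The step I expect to be the main obstacle is carrying out all of these moves \emph{equivariantly} while preserving b-continuity: each perturbation and each compression must be lifted to a deck-transformation-equivariant modification of the structure map $\tilde g$ on $\tilde X$, must respect the orbifold singular locus (so that $D$ is genuinely discal, $B$ genuinely ballic, and the surgered preimage genuinely a $2$-suborbifold), and must be packaged as a legitimate C-equivalence through a ballic $3$-orbifold. Checking that the chosen complexity is well-founded and strictly decreases under every move, so that the procedure terminates, is the other point requiring care.
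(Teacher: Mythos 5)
Your overall architecture --- put $f$ in general position with respect to a product neighborhood of $Y$, then run a complexity-decreasing sequence of C-equivalence moves that caps off trivial spherical components using the triviality of $O_2(Y)$ and $O_2(X-Y)$ and compresses compressible components along discal 2-orbifolds --- is exactly the transversality-plus-surgery scheme that the paper itself invokes: its entire proof is the sentence ``The proof is similar to that of \cite[Theorem 5.5]{finite},'' and that cited argument is the orbifold version of Culler--Shalen's Proposition 2.3.1 along the lines you describe. So in spirit you are reproducing the intended proof, and your list of delicate points (equivariance of the modifications on $\tilde X$, keeping $D$ discal and $B$ ballic relative to $\Sigma M$, well-foundedness of the complexity) is the right list.

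One step of your sketch, however, asserts more than the stated hypotheses deliver. In the compression move you claim that because $[g\circ\partial D]$ has finite order in $\pi_1(Y)$ (via injectivity of the edge-group inclusion and the fact that $D\setminus\partial D$ maps off $Y$), ``the fixed-point hypothesis of Proposition \ref{kune-Prop5.5'} is supplied.'' It is not: finite order of $[g\circ\partial D]$ in $\pi_1(Y)$ does not by itself give {\rm Fix}$([g\circ\partial D]_A)\neq\emptyset$. Producing that fixed point requires structural input on $Y$ of the kind used in Propositions \ref{kune-Prop5.8} and \ref{Oitrivial-prop} (irreducible particles with noncompact universal covers, plus the tree argument of Lemma \ref{fes-Lem2.2}), i.e.\ an $O_1$-type triviality for $Y$ that is not among the hypotheses of Theorem \ref{trans}, which only assume $O_2(X-Y)$ and $O_2(Y)$ trivial. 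In the application inside Theorem \ref{main1} these extra conditions do hold (they are exactly what Proposition \ref{Oitrivial-prop} verifies for the composition $X$ built there), so your argument goes through where it is used; but as a proof of Theorem \ref{trans} as stated you must either add this triviality as a hypothesis or obtain the discal extension of $g|_{\partial D}$ by another route. That is the one concrete hole to close; the rest is the standard argument the paper is deferring to.
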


\begin{proof}
The proof is similar to that of \cite[Theorem 5.5]{finite}.
\end{proof}

\begin{corollary}\label{c-eq-cor}
In Theorem \ref{trans}, if $O_i(X)$ are trivial, $i=2,3$, then $f$ and $g$ are b-homotopic by Remark \ref{c-eq-rem}.
\end{corollary}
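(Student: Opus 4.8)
The plan is to observe that the statement is an immediate consequence of combining the conclusion of Theorem \ref{trans} with Remark \ref{c-eq-rem}, so essentially no new argument is needed; the entire content has already been assembled in the preceding results.

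First I would invoke Theorem \ref{trans} to produce the b-continuous map $g=(|g|,\tilde{g}):M\to X$. Of the three properties (i)--(iii) guaranteed there, only (i) is relevant, namely that $g$ is C-equivalent to $f$. Next I would record that the extra hypothesis of the corollary is exactly that $O_i(X)$ is trivial for $i=2,3$, which is precisely the hypothesis required to apply Remark \ref{c-eq-rem}. Finally I would apply Remark \ref{c-eq-rem} to the C-equivalent pair $f,g$, concluding that $f$ and $g$ are b-homotopic. That finishes the proof.

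Because the corollary is this direct, there is no real obstacle in the statement itself; the genuine work sits inside Remark \ref{c-eq-rem}. Were one to justify that remark, the main point would be to upgrade each elementary C-equivalence move into a b-homotopy. When consecutive maps $f_{i-1}$ and $f_i$ in the C-equivalence chain agree off a ballic $3$-orbifold $B\subset M$, gluing their restrictions along $\partial B$ yields a b-continuous map from the double $\mathcal{D}B$ (or, in the case where $B$ meets $\partial M$ in a discal orbifold, from a spherical $2$-orbifold). The triviality of $O_3(X)$ (respectively $O_2(X)$) then permits one to cone off this map, producing a b-homotopy from $f_{i-1}|_B$ to $f_i|_B$ rel $\partial B$; these homotopies are constant outside $B$, so they extend to b-homotopies of the full maps. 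Concatenating along the finite C-equivalence chain delivers the required b-homotopy.

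For the corollary as written, all of this is already encapsulated in Remark \ref{c-eq-rem}, so the expected proof is simply the two-step citation: Theorem \ref{trans}(i) supplies C-equivalence, and Remark \ref{c-eq-rem}, applicable by the triviality hypothesis on $O_2(X)$ and $O_3(X)$, promotes it to a b-homotopy.
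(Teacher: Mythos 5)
Your proposal is correct and matches the paper exactly: the corollary is proved by the two-step citation of Theorem \ref{trans}(i) for C-equivalence and Remark \ref{c-eq-rem} for upgrading it to a b-homotopy, which is all the paper itself offers. Your additional sketch of why Remark \ref{c-eq-rem} holds (doubling along $\partial B$ and coning off via the triviality of $O_2$ and $O_3$) is a reasonable bonus but not required for the corollary.
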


\section{Group representations and splittings of groups}

For the contents of the present section we refer to the original paper \cite{C-S}.

An isomorphism from a group $\Pi$ to $\pi_1(G,{\mathcal G})$ is called a {\it splitting\/} of $\Pi$ where $\pi_1(G,{\mathcal G})$ is the fundamental group of a graph of groups $(G,{\mathcal G})$. A splitting is {\it trivial\/} if there exists a vertex group which is isomorphic to the whole $\pi_1(G,{\mathcal G})$.

Let $\Pi$ be a finitely generated group. Take a system of generators $g_1,\dots,g_n$ for $\Pi$. We define a set $R(\Pi)=\{ (\rho(g_1),\dots,\rho(g_n))\;|\; \mbox{a representation}\; \rho:\Pi\rightarrow SL_2(\mathbb{C}) \}$. The points of $R(\Pi)$ correspond to the representations of $\Pi$ in $SL_2(\mathbb{C})$ bijectively, and we often identify such 1-1 corresponding points. For each $g\in\Pi$, we may define a map $\tau_g:R(\Pi)\rightarrow \mathbb{C}$ by $\tau_g(\rho)=\mbox{tr}(\rho(g))$. Let $T$ be the ring generated by all the functions $\tau_g$, $g\in\Pi$. It is finitely generated (\cite[Proposition 1.4.1]{C-S}). Thus we can take and fix $\gamma_1,\dots,\gamma_m\in\Pi$ such that  $\tau_{\gamma_1},\dots,\tau_{\gamma_m}$ generate $T$. With those elements we define a map $t:R(\Pi)\rightarrow \mathbb{C}^m$ by $t(\rho)=(\tau_{\gamma_1}(\rho),\dots,\tau_{\gamma_m}(\rho))$, and set $\mathbb{X}(\Pi)=t(R(\Pi))$. For each $g\in\Pi$, there exists a function $I_g:\mathbb{X}(\Pi)\rightarrow\mathbb{C}$ which maps $t(\rho)$ to $tr(\rho(g))$. It is a regular function.

\begin{theorem}{\rm (}\cite[Theorem 2.1.1]{C-S}{\rm )}\label{CS-2.1.1-th}
If the group $\Pi$ acts without edge inversions on the tree $T$, then $\Pi$ is isomorphic to $\pi_1(T/\Pi,{\mathcal G})$ where $(T/\Pi,{\mathcal G})$ is defined in \cite[pp.123-124]{C-S}.
\end{theorem}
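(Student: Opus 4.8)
The plan is to follow the standard structure theorem for groups acting on trees, realizing $\Pi$ explicitly from the combinatorics of the action, exactly as in Bass--Serre theory (\cite{Se}). First I would form the quotient graph $Y=T/\Pi$. The hypothesis that $\Pi$ acts without edge inversions is precisely what guarantees that $Y$ is a genuine graph (no edge is folded onto itself) and that the quotient map $\pi\colon T\to Y$ is a morphism of graphs. I would then choose a maximal subtree $Y_0\subset Y$ and lift it: since $Y_0$ is a tree, one can construct a section $j\colon Y_0\to T$ onto a subtree of $T$, built inductively by extending the lift one edge at a time, using that $T$ is connected and simply connected to resolve the choices. For each edge $e$ of $Y$ not lying in $Y_0$ I would additionally fix a lift $\tilde e$ having one endpoint equal to the chosen lift of the corresponding vertex; comparing the other endpoint of $\tilde e$ with the chosen lift of its image vertex produces a distinguished element $g_e\in\Pi$ implementing the identification. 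This data determines the graph of groups $(Y,\mathcal G)$ of \cite[pp.123-124]{C-S}: the vertex group at $v$ is the stabilizer $\Pi_{j(v)}$, the edge group at $e$ is $\Pi_{j(e)}$, and the edge monomorphisms are the inclusions of edge stabilizers into vertex stabilizers, conjugated by the appropriate $g_e$.

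Next I would construct a homomorphism $\Phi\colon\pi_1(Y,\mathcal G)\to\Pi$: send each stabilizer element of a vertex group to itself, viewed in $\Pi$, send the stable letter associated to each edge $e\notin Y_0$ to $g_e$, and send the stable letter of each $e\in Y_0$ to the identity. Checking that $\Phi$ respects the defining relations of $\pi_1(Y,\mathcal G)$ is a direct verification from the way the $g_e$ were chosen. Surjectivity of $\Phi$ follows from the fact that $\Pi$ is generated by the vertex stabilizers of the lifts together with the elements $g_e$: any $g\in\Pi$ carries the base vertex $j(v_0)$ along a path in $T$, and projecting that path to $Y$ and reading off the edges it crosses expresses $g$ as a product of vertex-group elements and stable letters.

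The hard part will be injectivity of $\Phi$, that is, showing that a reduced word in $\pi_1(Y,\mathcal G)$ cannot map to the identity of $\Pi$. I would establish this by the normal form argument of Bass--Serre: a reduced word determines a nondegenerate edge-path in $T$ issuing from $j(v_0)$, and because $T$ is a tree this path cannot backtrack or return to its start, so the corresponding element of $\Pi$ moves $j(v_0)$ and in particular is nontrivial. Equivalently, one may build the Bass--Serre tree $\widetilde{(Y,\mathcal G)}$ of the graph of groups and exhibit a $\pi_1(Y,\mathcal G)$-equivariant simplicial isomorphism $\widetilde{(Y,\mathcal G)}\to T$: by construction the vertices and edges of the Bass--Serre tree are cosets of vertex and edge groups, which correspond bijectively to the vertices and edges of $T$, and tracking $\Phi$-equivariance forces the action to be faithful, which is exactly injectivity.

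Either route reduces the whole statement to the acyclicity of $T$, so the no-inversion hypothesis is used to make $Y$ a graph and to obtain the lift $j$, while the tree (simply connected) hypothesis is used precisely at the injectivity step to rule out cancellation. I expect the construction of the section $j$ over $Y_0$ and the verification of the graph-of-groups relations to be routine, and the reduced-word/equivariant-tree argument for injectivity to be the genuine content.
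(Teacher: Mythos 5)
Your outline is correct and is precisely the standard Bass--Serre structure-theorem argument: form the quotient graph, lift a maximal subtree, choose the elements $g_e$ for the remaining edges, get surjectivity from connectivity of $T$, and get injectivity from the normal-form/no-backtracking argument in the tree. The paper gives no proof of this statement --- it is quoted directly from \cite[Theorem 2.1.1]{C-S}, whose proof (following \cite{Se}) is exactly this argument --- so your route coincides with the one the paper defers to.
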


\begin{theorem}{\rm (}\cite[Theorem 2.2.1]{C-S}{\rm )}\label{CS-main-th}
Let $C$ be an affine curve contained in $\mathbb{X}(\Pi)$ and $\tilde{C}$ be a non-singular projective curve uniquely determined by $C$. To each ideal point $\tilde{x}$ of $\tilde{C}$ one can associate a splitting of $\Pi$ with the property that an element $g$ of $\Pi$ is contained in a vertex group if and only if $\tilde{I}_g$ does not have a pole at $\tilde{x}$. Thus, in particular, the splitting is non-trivial.
\end{theorem}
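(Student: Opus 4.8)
The plan is to realize the ideal point $\tilde x$ as a discrete valuation of the function field of (a curve dominating) $C$, to use that valuation to build a simplicial tree on which $\Pi$ acts through a tautological $SL_2$-representation, and then to read off the splitting from this action via Bass--Serre theory (Theorem~\ref{CS-2.1.1-th}). The entire content is therefore to manufacture a \emph{nontrivial} $\Pi$-action on a tree whose elliptic elements are exactly those $g$ for which $\tilde I_g$ has no pole at $\tilde x$.

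First I would fix the algebra. Choose an affine curve $D$ in the representation variety $R(\Pi)$ whose image under the trace map $t$ is dense in $C$, and let $F=\mathbb{C}(D)$ be its function field; evaluating representations along $D$ produces a \emph{tautological representation} $P\colon\Pi\to SL_2(F)$ whose character is the pullback of the tautological character of $C$, so that $\operatorname{tr}P(g)=I_g$ in $F$ for every $g\in\Pi$. The ideal point $\tilde x$ of the smooth projective model $\tilde C$ determines a discrete valuation $v_0$ of $\mathbb{C}(C)$, namely the order of vanishing at $\tilde x$; I would extend $v_0$ to a discrete valuation $v$ of $F$. Because $\tilde x$ is an ideal point, the morphism $\tilde C\to\mathbb{P}^m$ sends $\tilde x$ to infinity, so at least one coordinate $I_{\gamma_i}$ has a pole there, i.e. $v(I_{\gamma_i})<0$.

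Next I would invoke the Bruhat--Tits tree. Let $\mathcal{O}\subset F$ be the valuation ring of $v$ with uniformizer $\pi$, and let $T$ be the tree whose vertices are homothety classes of $\mathcal{O}$-lattices in $F^2$, with an edge joining $[L]$ and $[L']$ whenever $\pi L\subsetneq L'\subsetneq L$ for suitable representatives. Then $SL_2(F)$ acts simplicially on $T$, and composing with $P$ gives an action of $\Pi$ on $T$; passing to the barycentric subdivision if necessary, I may assume the action has no edge inversions. The essential computation is the standard $SL_2$ trace--length dictionary: an element $A\in SL_2(F)$ fixes a vertex of $T$ if and only if $\operatorname{tr}A$ is integral, i.e. $v(\operatorname{tr}A)\ge 0$. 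Indeed, writing the eigenvalues as $\lambda,\lambda^{-1}$, one checks that the translation length of $A$ along its axis equals $2|v(\lambda)|$, which is positive exactly when $v(\operatorname{tr}A)=v(\lambda+\lambda^{-1})<0$. Applying this to $A=P(g)$ shows that $g$ is elliptic precisely when $I_g$ has no pole at $\tilde x$.

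Finally I would assemble the splitting. Since $v(I_{\gamma_i})<0$ for some $i$, the element $\gamma_i$ acts hyperbolically, so $\Pi$ fixes no vertex of $T$ and the action is nontrivial. By Theorem~\ref{CS-2.1.1-th}, $\Pi\cong\pi_1(T/\Pi,\mathcal{G})$ with vertex groups the vertex stabilizers, and an element lies in (a conjugate of) a vertex group exactly when it fixes a vertex of $T$, that is, exactly when $\tilde I_g$ has no pole at $\tilde x$; triviality of the splitting would force some vertex group to equal all of $\Pi$ and hence every $I_g$ to be pole-free, contradicting the choice of $\gamma_i$. I expect the main obstacle to be the first step: producing the tautological representation with the prescribed traces (handling components consisting of reducible characters, and the passage from $\mathbb{C}(C)$ to $F$, together with the ramification incurred when extending $v_0$ to $v$, which must be controlled so that the sign of the valuation of each $I_g$ is preserved). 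The tree computation and the Bass--Serre conclusion are then comparatively formal.
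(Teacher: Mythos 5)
The paper states this result as a direct quotation of \cite[Theorem 2.2.1]{C-S} and gives no proof of its own, so the only comparison available is with Culler--Shalen's original argument, and your sketch is exactly that argument: ideal point $\leadsto$ valuation on the function field of a curve in $R(\Pi)$ dominating $C$ $\leadsto$ tautological representation into $SL_2(F)$ $\leadsto$ action on the Bruhat--Tits tree $\leadsto$ Bass--Serre splitting, with the trace--integrality criterion $v(\operatorname{tr}A)\ge 0$ characterizing the elliptic elements and the pole of some coordinate $I_{\gamma_i}$ forcing nontriviality. The technical points you flag at the end (reducible characters, and controlling ramification in the extension of $v_0$ to $v$ so that only the sign of the valuation matters) are precisely the ones Culler--Shalen handle, so the proposal is correct and takes essentially the same route as the cited source.
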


\section{Main Theorem}

\begin{theorem}\label{main1}
Let $M$ be a good, compact, connected, orientable 3-orbifold without non-separating spherical 2-orbifolds. We assume that the fundamental group of each prime component of $M$ is infinite. Suppose that $\pi_1(M)$ has a nontrivial finite splitting. Then there exists a non-empty system of essential 2-suborbifolds $S_1,\dots,S_n\subset M$ such that for each component $Q$ of $\displaystyle M-\bigcup_{i=1}^nS_i$, $\pi_1(Q)$ is contained in a vertex group.
\end{theorem}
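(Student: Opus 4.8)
The plan is to convert the given nontrivial finite splitting of $\Pi=\pi_1(M)$ into a geometric object, namely an orbifold composition $X$ whose combinatorial skeleton is the underlying graph of groups, to realize the splitting isomorphism by a b-continuous map $M\to X$, and then to pull back the cores of the edge pieces of $X$ by a transverse representative. Write the splitting as an isomorphism $\varphi\colon\Pi\to\pi_1(G,\mathcal G)$, where $(G,\mathcal G)$ is a graph of groups with \emph{finite} edge groups; nontriviality means no vertex group is carried onto all of $\pi_1(G,\mathcal G)$. First I would build a $3$-dimensional orbifold composition $X=(X_i,Y_j\times[0,1],f_j^\varepsilon)$ whose vertex OISIBO's $X_i$ have fundamental groups the vertex groups of $(G,\mathcal G)$ and whose edge cores $Y_j$ are spherical $2$-orbifolds realizing the finite edge groups, with attaching maps inducing the edge monomorphisms; by construction $\pi_1(X)\cong\pi_1(G,\mathcal G)$, and we may regard $\varphi$ as a homomorphism $\pi_1(M)\to\pi_1(X)$.

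The point of the construction is to arrange that every particle of every $X_i$ (and of every $Y_j$) is an orientable, irreducible $3$-orbifold with noncompact universal cover. Here the hypotheses are consumed: the absence of non-separating spherical $2$-orbifolds lets me take the prime (spherical) decomposition of $M$ and assemble the $X_i$ out of copies of the prime components of $M$, and the assumption that each prime component has infinite fundamental group guarantees that these particles have noncompact universal covers. With this in hand, Proposition \ref{Oitrivial-prop} shows that $O_1(X)$, $O_2(X)$ and $O_3(X)$ are all trivial; the same reasoning applies to $Y=\coprod_j Y_j$ and to $X-Y$, so that $O_2(Y)$ and $O_2(X-Y)$ are trivial as well. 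Triviality of $O_1(X)$ and $O_2(X)$ lets me invoke Proposition \ref{kune-Prop5.3'} to produce a b-continuous map $f\colon M\to X$ with $f_*=\varphi$.

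Next I would apply the Transversality Theorem \ref{trans} (to the disjoint union $Y=\coprod_j Y_j$ of edge cores, or to each core in turn) to replace $f$ by a C-equivalent b-continuous map $g=(|g|,\tilde g)$ for which $g^{-1}(Y)$ is a system of compact, properly embedded, two-sided, incompressible $2$-suborbifolds of $M$; since $O_2(X)$ and $O_3(X)$ are trivial, Corollary \ref{c-eq-cor} guarantees that $g$ is in fact b-homotopic to $f$, so $g_*=\varphi$ up to base points and inner automorphisms. Using the ballic moves built into C-equivalence I would discard the trivial (discal-bounding) components and push off the boundary-parallel ones, leaving an essential system $S_1,\dots,S_n$. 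For a complementary region $Q$ of $M-\bigcup_i S_i$, the map $|g|$ sends $Q$ into the complement of the open edge pieces, hence by connectedness into a single vertex OISIBO $X_{i_0}$; therefore the composite $\pi_1(Q)\to\pi_1(M)\xrightarrow{\varphi}\pi_1(X)$ factors through $\pi_1(X_{i_0})$, which is a vertex group, giving the desired containment. Non-emptiness is forced by nontriviality of the splitting: if $g^{-1}(Y)$ were empty, then $g(M)$ would lie in one vertex OISIBO and $\varphi(\pi_1(M))$ would be contained in a proper vertex group, contradicting that $\varphi$ is onto.

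I expect the principal obstacle to be the construction of $X$ itself: one must realize the abstract graph of groups by an orbifold composition whose particles are simultaneously (i) irreducible with noncompact universal cover, so that Propositions \ref{kune-Prop5.8}, \ref{kune-Prop5.9} and \ref{Oitrivial-prop} apply, and (ii) assembled so that $\pi_1(X)$ is genuinely isomorphic to $\pi_1(G,\mathcal G)$ with the edge cores modelling the finite edge groups. Closely related is the final pruning step: one must be sure that removing inessential components does not empty the system, which is precisely why the nontriviality argument is run \emph{after} pruning, and why the hypothesis forbidding non-separating spherical $2$-orbifolds (so that every spherical suborbifold separates and every trivial one bounds a ballic orbifold) is needed to carry out the C-equivalence reductions.
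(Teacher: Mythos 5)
Your overall strategy --- build an orbifold composition $X$ modelling the graph of groups, realize the splitting by a b-continuous map $f\colon M\to X$ via Proposition \ref{kune-Prop5.3'}, make $f$ transverse to the edge cores by Theorem \ref{trans} and Corollary \ref{c-eq-cor}, and then prune --- is exactly the paper's. But there is a genuine gap at the point you yourself flag as ``the principal obstacle'': the construction of $X$. First, you have misread the hypothesis: a ``finite splitting'' here means a splitting over a \emph{finite graph} of groups (the proof only uses that the edge set $J$ is finite, and Corollary \ref{main-cor1} confirms this reading by assuming $T/\pi_1(M)$ finite; Corollary \ref{main-cor2} is the Culler--Shalen setting, where edge groups are typically infinite). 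So you cannot take the edge cores $Y_j$ to be spherical $2$-orbifolds ``realizing the finite edge groups'': the edge groups need not be finite, and in any case a spherical $2$-orbifold is a compact $2$-orbifold with universal cover $S^2$, which contradicts your own subsequent requirement that every particle of every $Y_j$ be an irreducible $3$-orbifold with noncompact universal cover --- the hypothesis under which Propositions \ref{kune-Prop5.8}, \ref{kune-Prop5.9} and \ref{Oitrivial-prop} (and hence the triviality of $O_2(Y_j)$ and $O_2(X-Y_j)$ needed to invoke Theorem \ref{trans}) actually apply.

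Second, ``assembling the $X_i$ out of copies of the prime components of $M$'' does not produce $\pi_1(X_i)\cong G_i$ for an arbitrary vertex group $G_i$, which is just some subgroup of $\pi_1(M)$ with no a priori relation to the prime factors. The paper's device, which you are missing, is a covering-space construction: first reglue the prime components along the ballic $3$-orbifolds used in capping off to obtain a single OISIBO $W$ with $\pi_1(W)\cong\pi_1(M)$ whose particles are irreducible; then define $X_i$ (respectively $Y_j$) to be the covering OISIBO of $W$ corresponding to the subgroup $G_i$ (respectively $H_j$), with attaching maps $p^j_t\colon Y_j\to X_{j_t}$ the covering maps induced by the inclusions $H_j<G_{j_t}$. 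This simultaneously gives $\pi_1(X)\cong\pi_1(G,\mathcal{G})\cong\pi_1(M)$, makes the attaching maps $\pi_1$-injective, and guarantees that every particle of every component is an irreducible $3$-orbifold with noncompact universal cover (this is where the hypothesis that each prime component has infinite fundamental group is consumed). With that construction in place, the rest of your argument --- transversality, pruning of boundary-parallel components, the containment of $\pi_1(Q)$ in a vertex group, and non-emptiness from nontriviality of the splitting --- goes through as in the paper.
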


\begin{proof}
From the hypotheses, $\pi_1(M)$ is isomorphic to $\pi_1(G,{\mathcal G})$, the fundamental group of a graph of groups $(G,{\mathcal G})$. Along the splitting of $\pi_1(M)$, we construct an orbifold composition (of general type) $X$ as follows:

\underline{Step a} \quad
We may assume that $M$ has no spherical boundary components. Take a base point $y_0\in|M|-\Sigma M$ of $M$.

\underline{Step b} \quad
By the hypotheses, we can take a prime decomposition of $M$, each component of which has an infinite fundamental group. Gluing back the prime components by b-continuous maps along the ballic 3-orbifolds attached in capping off, we obtain an OISIBO $W$ with $\pi_1(W)\cong \pi_1(M)$, each particle of which is an irreducible 3-orbifold.

\underline{Step c} \quad
Let $G_i$, $i\in I$ and $H_j$, $j\in J$ be the vertex groups and the edge groups of $(G,\mathcal{G})$, respectively. Take the covering OISIBO $X_i$ of $W$ associated with each vertex group $G_i$ of $(G,{\mathcal G})$, and the covering OISIBO $Y_j$ of $W$ associated with each edge group $H_j$ of $(G,{\mathcal G})$. If an edge $e_j$ of $G$ has verteces $v_{j_0}$, $v_{j_1}$, then $H_j<G_{j_t}$, $t=0,1$. Thus there exist covering maps $p^j_t:Y_j\to X_{j_t}$ which induce monomorphisms $(p^j_t)_*:H_j\to G_{j_t}$, $t=0,1$.

\underline{Step d} \quad
The system $X=(X_i,Y_j\times [0,1],p^j_t)_{t=0,1,i\in I,j\in J}$ is a desired orbifold composition with $\pi_1(X)\cong\pi_1(T/\pi_1(M),{\mathcal G})\cong\pi_1(M)$. Take a base point $x_0\in |Y_1\times \frac{1}{\; 2\;}|-\Sigma (Y_1\times\frac{1}{\; 2\;})$ of $X$.

\vskip2mm
By Proposition \ref{Oitrivial-prop}, $O_i(X)$ are trivial, $i=1,2,3$. By Proposition \ref{kune-Prop5.3'}, we can construct a b-continuous map $f:M\to X$ which induces an isomorphism $\varphi:\pi_1(M,y_0)\rightarrow \pi_1(X,x_0)$.

Note that the set $J$ is finite. For all $j\in J$, take $f^{-1}(Y_j\times\{\frac{1}{\; 2\;}\})$. By applying Proposition \ref{Oitrivial-prop} for $X-Y_j$, we obtain the fact that $O_2(X-Y_j)$ is trivial. And applying Proposition \ref{kune-Prop5.9} for $Y_j$, we obtain the fact that $O_2(Y_j)$ is trivial. We have already shown that $O_i(X)$'s are trivial, $i=2,3$. With modifications through b-homotopies if necessary, by Theorem \ref{trans} and Corollary \ref{c-eq-cor}, each component of $f^{-1}(\bigcup_{j\in J}(Y_j\times\{\frac{1}{\; 2\;}\}))$ is a compact, properly embedded, 2-sided, incompressible 2-suborbifold in $M$. If one of such components is boundary parallel, we can reduce the number of components by modifications through b-homotopies. Note that we can make those modifications fixing on some neighborhood of each component of $\displaystyle\bigcup_{j\in J}f^{-1}(Y_j)$, which is already imcompressible. After those modifications, we obtain a system of essential 2-suborbifolds $S_1,\cdots,S_n$ as (not necessarily all) components of $f^{-1}(Y_j\times\{\frac{1}{\; 2\;}\})$, $j\in J$.

By the construction of $S_1,\cdots,S_n$, for each component $Q$ of $M-\bigcup_{j=1}^nS_j$, $\pi_1(Q)$ is contained in a vertex group of $(G,{\mathcal G})$.
\end{proof}

\begin{corollary}\label{main-cor1}
Let $M$ be a good, compact, connected, orientable 3-orbifold without non-separating spherical 2-orbifolds. We assume that the fundamental group of each prime component of $M$ is infinite. Suppose that $\pi_1(M)$ acts on a simplicial tree $T$ nontrivially without edge inversions such that $T/\pi_1(M)$ is finite. Then there exists a non-empty system of essential 2-suborbifolds $S_1,\dots,S_n\subset M$ such that for each component $Q$ of $\displaystyle M-\bigcup_{i=1}^nS_i$, $\pi_1(Q)$ fixes a vertex of the tree.
\end{corollary}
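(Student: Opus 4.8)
The plan is to reduce Corollary \ref{main-cor1} to the already-proven Theorem \ref{main1} by translating the hypothesis about a tree action into the hypothesis about a finite splitting. The essential bridge is Theorem \ref{CS-2.1.1-th}: since $\pi_1(M)$ acts on the simplicial tree $T$ without edge inversions, it is isomorphic to $\pi_1(T/\pi_1(M),{\mathcal G})$, the fundamental group of the graph of groups $(T/\pi_1(M),{\mathcal G})$. Because $T/\pi_1(M)$ is assumed finite, this graph of groups is finite, so the isomorphism $\pi_1(M)\cong\pi_1(T/\pi_1(M),{\mathcal G})$ is a nontrivial finite splitting in the sense of Section 6, \emph{provided} I verify that nontriviality of the action translates into nontriviality of the splitting.

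\emph{First} I would record that the hypotheses on $M$ (good, compact, connected, orientable, no non-separating spherical 2-orbifolds, each prime component with infinite fundamental group) are identical in the two statements, so nothing needs to be checked there; they carry over verbatim. \emph{Second}, I would establish that the splitting is nontrivial. A splitting is trivial exactly when some vertex group equals the whole $\pi_1(M)$; in the graph-of-groups picture coming from a tree action this happens precisely when a vertex of $T$ is fixed by all of $\pi_1(M)$, i.e. when the action is trivial in the sense defined in Section 2. Since we assume the action is nontrivial, no global fixed vertex exists, hence no vertex group is all of $\pi_1(M)$, so the splitting is nontrivial. \emph{Third}, with a nontrivial finite splitting of $\pi_1(M)$ in hand, I would invoke Theorem \ref{main1} directly to obtain the non-empty system of essential 2-suborbifolds $S_1,\dots,S_n\subset M$ such that for each component $Q$ of $M-\bigcup_{i=1}^n S_i$, the group $\pi_1(Q)$ lies in a vertex group of $(T/\pi_1(M),{\mathcal G})$.

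The final step is to reinterpret the conclusion in the language of the tree action. Under the isomorphism $\pi_1(M)\cong\pi_1(T/\pi_1(M),{\mathcal G})$ furnished by Theorem \ref{CS-2.1.1-th}, each vertex group of the graph of groups is (up to conjugacy) the stabilizer in $\pi_1(M)$ of a vertex of $T$. Thus the statement ``$\pi_1(Q)$ is contained in a vertex group'' is exactly the statement ``$\pi_1(Q)$ fixes a vertex of $T$,'' which is the desired conclusion of the corollary.

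The main obstacle I anticipate is purely bookkeeping rather than conceptual: one must be careful that the identification of vertex groups of $(T/\pi_1(M),{\mathcal G})$ with vertex stabilizers of the $\pi_1(M)$-action on $T$ is the correct one, since a subgroup of $\pi_1(M)$ contained in a vertex group of the graph of groups corresponds to a subgroup lying in some conjugate of a vertex stabilizer, and it is the ``fixes a vertex'' phrasing (which is conjugation-invariant, as conjugate subgroups fix translated vertices) that makes the translation clean. Once that correspondence is stated correctly, the proof is simply the chain Theorem \ref{CS-2.1.1-th} followed by Theorem \ref{main1}.
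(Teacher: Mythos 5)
Your proposal is correct and follows exactly the paper's route: the paper's own proof is the one-line citation of Theorem \ref{CS-2.1.1-th} together with Theorem \ref{main1}, and your argument simply spells out the details (finiteness and nontriviality of the induced splitting, and the translation between vertex groups and vertex stabilizers) that the paper leaves implicit.
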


\begin{proof}
By Theorems \ref{CS-2.1.1-th} and \ref{main1}.
\end{proof}

\begin{corollary}\label{main-cor2}
Let $M$ be a good, compact, connected, orientable 3-orbifold without non-separating spherical 2-orbifolds. We assume that the fundamental group of each prime component of $M$ is infinite. Let $C$ be an affine curve contained in $\mathbb{X}(\pi_1(M))$. To each idel point $\tilde{x}$ of $\tilde{C}$ one can associate a splitting of $\pi_1(M)$ with the property that an element $g$ of $\pi_1(M)$ is contained in a vertex group if and only if $\tilde{I}_g$ does not have a pole at $\tilde{x}$. Then there exists a non-empty system of essential 2-suborbifolds $S_1,\dots,S_n\subset M$ such that for each component $Q$ of $\displaystyle M-\bigcup_{i=1}^nS_i$, $\pi_1(Q)$ is contained in a vertex group.
\end{corollary}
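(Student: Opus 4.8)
The plan is to deduce the statement from the Main Theorem~\ref{main1} together with Culler--Morgan--Shalen's Theorem~\ref{CS-main-th}, in the same spirit in which Corollary~\ref{main-cor1} is deduced from Theorems~\ref{CS-2.1.1-th} and~\ref{main1}. First I would invoke Theorem~\ref{CS-main-th}: the ideal point $\tilde{x}$ of $\tilde{C}$ yields a nontrivial splitting $\pi_1(M)\cong\pi_1(G,\mathcal{G})$, equivalently a nontrivial action of $\pi_1(M)$ on the associated Bass--Serre tree $T$, carrying the stated property that $g\in\pi_1(M)$ lies in a vertex group exactly when $\tilde{I}_g$ has no pole at $\tilde{x}$. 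After passing to the barycentric subdivision of $T$ we may assume this action is without edge inversions, so that Theorem~\ref{CS-2.1.1-th} applies and recovers the splitting from the tree.

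The only point separating Theorem~\ref{CS-main-th} from the hypothesis of Theorem~\ref{main1} is that the latter requires a \emph{finite} splitting, whereas Theorem~\ref{CS-main-th} produces no a priori bound on the graph of groups. I would close this gap using that $\pi_1(M)$ is finitely generated, as $M$ is compact. A finitely generated group acting nontrivially on a tree (that is, with no global fixed vertex and no fixed end) possesses a unique minimal invariant subtree $T_0$, and the quotient graph $T_0/\pi_1(M)$ is finite. Replacing $T$ by $T_0$ and applying Theorem~\ref{CS-2.1.1-th} to the restricted action, I obtain a nontrivial \emph{finite} splitting of $\pi_1(M)$.

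Next I would verify that the pole characterization of vertex groups survives this reduction. Every vertex of $T_0$ is a vertex of $T$, so an element contained in a vertex group of the minimal splitting is contained in a vertex group of the original splitting. Conversely, an element $g$ with no pole at $\tilde{x}$ is elliptic, hence fixes a point of $T$; since $T_0$ is $g$-invariant and convex and $\mathrm{Fix}(g)$ is a nonempty convex subtree, a bridge argument shows $\mathrm{Fix}(g)\cap T_0\neq\emptyset$, so (there being no inversions) $g$ fixes a vertex of $T_0$. Thus the predicate that $g$ lies in a vertex group is unchanged, and the characterization of Theorem~\ref{CS-main-th} transfers verbatim to the finite splitting. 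Finally, since the hypotheses on $M$ in the present corollary are identical to those of Theorem~\ref{main1}, I would apply Theorem~\ref{main1} to the finite splitting just produced, obtaining the non-empty system $S_1,\dots,S_n\subset M$ of essential $2$-suborbifolds with $\pi_1(Q)$ contained in a vertex group for each component $Q$ of $M-\bigcup_{i=1}^n S_i$.

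I expect the main obstacle to be the finiteness reduction of the second paragraph rather than the invocations of the two cited theorems. One must be certain that the Culler--Morgan--Shalen action genuinely admits a minimal invariant subtree with finite quotient: this is where finite generation of $\pi_1(M)$ must be combined with the fact that the action fixes no end, so that a minimal subtree exists and its quotient is a finite graph. If the fixed-end case cannot be excluded on general grounds, it would have to be ruled out from the specific structure of the tree arising from the valuation at $\tilde{x}$, and this is the step I would scrutinize most carefully before declaring the argument complete.
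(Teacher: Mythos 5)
Your proposal is correct and follows essentially the same route as the paper, whose entire proof of this corollary is the single line ``By Theorems \ref{CS-main-th} and \ref{main1}.'' The additional care you take in reducing to a \emph{finite} splitting via the minimal invariant subtree of the Bass--Serre tree (using finite generation of $\pi_1(M)$) and in checking that the pole characterization of vertex groups survives that reduction is a gap the paper leaves implicit, and your treatment of it is sound.
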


\begin{proof}
By Theorems \ref{CS-main-th} and \ref{main1}.
\end{proof}

\end{document}